\documentclass[a4paper,11pt,sort&compress]{elsarticle}
\usepackage{amscd}
\usepackage{epstopdf}
\usepackage{color}
\usepackage{latexsym}
\usepackage{epsf}
\usepackage{multicol}
\usepackage{ifpdf}
\usepackage{amsmath,amsfonts,amssymb,epsfig,subfigure}
\usepackage{mathrsfs}
\usepackage{stmaryrd}
\usepackage{graphicx,amsmath}
\usepackage{latexsym}
\usepackage{verbatim}
\usepackage{listings}

\ifpdf
\usepackage[%
  pdftitle={Instructions for use of the document class
    elsart},%
  pdfauthor={},%
  pdfsubject={The preprint document class elsart},%
  pdfkeywords={instructions for use, elsart, document class},%
  pdfstartview=FitH,%
  bookmarks=true,%
  bookmarksopen=true,%
  breaklinks=true,%
  colorlinks=true,%
  linkcolor=blue,anchorcolor=blue,%
  citecolor=blue,filecolor=blue,%
  menucolor=blue,pagecolor=blue,%
  urlcolor=blue]{hyperref}
\else
\usepackage[%
  breaklinks=true,%
  colorlinks=true,%
  linkcolor=blue,anchorcolor=blue,%
  citecolor=blue,filecolor=blue,%
  menucolor=blue,pagecolor=blue,%
  urlcolor=blue]{hyperref}
\fi
\textwidth 15.9 true cm \textheight 22 true cm
\linespread{1.3}
\topmargin=-5 true mm \oddsidemargin= 5 true mm

\makeatletter
\def\elsartstyle{%
    \def\normalsize{\@setfontsize\normalsize\@xiipt{14.5}}
    \def\small{\@setfontsize\small\@xipt{13.6}}
    \let\footnotesize=\small
    \def\large{\@setfontsize\large\@xivpt{18}}
    \def\Large{\@setfontsize\Large\@xviipt{22}}
    \skip\@mpfootins = 18\p@ \@plus 2\p@
    \normalsize
} \@ifundefined{square}{}{}
\makeatother

\newtheorem{theorem}{Theorem}[section]

\newtheorem{lemma}[theorem]{Lemma}
\newtheorem{rem}[theorem]{Remark}
\newtheorem{expl}[theorem]{Example}

\makeatletter \@addtoreset{equation}{section}

\allowdisplaybreaks
\newcommand{\E}{\mathbb{E}}
\newcommand{\PP}{\mathbb{P}}
\newcommand{\RR}{\mathbb{R}}

\def\nn{\nonumber}

\def\lf{\left} \def\rt{\right}\def\t{\triangle} 
\def\la{\label}\def\be{\begin{equation}}  \def\ee{\end{equation}}


\pagestyle{plain}
 
\topmargin=-10truemm
\oddsidemargin= 5 true mm
\usepackage[text={6.5in,9in},centering]{geometry}

\journal{Journal of \LaTeX\ Templates}


 \bibliographystyle{elsarticle-num}
\begin{document}

\begin{frontmatter}

\title{First order strong convergence and extinction of positivity preserving logarithmic  truncated Euler-Maruyama method  for the stochastic SIS epidemic model\tnoteref{mytitlenote}}
\tnotetext[mytitlenote]{This work was supported by National Natural Science Foundation of China (12101144,  12001125).
}


\author[]{Hongfu Yang}
\ead{yanghf783@nenu.edu.cn}
\address{School of Mathematics and Statistics, Guangxi Normal University, Guangxi, Guilin, 541004, PR China}

\begin{abstract}
The well-known stochastic SIS model characterized by highly nonlinear in
epidemiology has a unique positive solution taking values in a bounded domain with a series of dynamical behaviors. However, the
approximation methods to maintain the positivity and long-time behaviors for the stochastic SIS model, while very important, are also lacking.
 In this paper,  based on a logarithmic transformation, we propose a novel explicit numerical method for a stochastic SIS epidemic model whose coefficients violate the global monotonicity condition, which can preserve the positivity of the original stochastic SIS model. And we show the strong convergence of the numerical method and derive that the rate of convergence is of order one. Moreover, the extinction of the exact solution of stochastic SIS model is reproduced. Some numerical experiments are given to illustrate the theoretical results and testify the efficiency of our algorithm.
\end{abstract}

\begin{keyword}
Stochastic SIS epidemic model; Logarithmic truncated Euler-Maruyama method; Positivity-preserving; First order convergence rate; Extinction
\MSC[2010]  65H10; 60J60; 60J27; 60J22
\end{keyword}

\end{frontmatter}


\section{Introduction}\label{s-w}

Infectious diseases are the public enemy of human population and seriously threatening the health and life. In many research methods of infectious disease, epidemic models are widely used for analyzing the spread and control of infectious diseases. A common type of mathematical models in epidemiology is known as the SIS (Susceptible-Infected-Susceptible) epidemic models, which are used to describe some infections that don't have permanent immunity and individuals become susceptible again after infection. For example,
Hethcote-Yorke \cite{Hethcote}  has indicated that rota-viruses, some sexually transmitted and bacterial infections have such characteristics.
 In the real world, epidemic spreads are always affected by demographic and environmental stochasticity. However, deterministic model can't capture these fluctuating features. Hence, the stochastic SIS epidemic model \cite{GGHMP2011}, which can provide description and prediction more accurately than that of their deterministic counterpart,
 have attracted considerable attention and is described by
\begin{align}\label{(4)}
\left\{\begin{aligned}
\mathrm{d}S(t)&= \big[\mu N-\beta S(t)I(t)+\gamma I(t)-\mu S(t)\big]\mathrm{d}t-\sigma S(t)I(t)\mathrm{d}B(t),\\
\mathrm{d}I(t)&= \big[\beta S(t)I(t)-(\mu+\gamma)I(t)\big]\mathrm{d}t
+\sigma S(t)I(t)\mathrm{d}B(t),
\end{aligned}\right.
\end{align}
with initial values $I(0)+S(0)=S_0+I_0=N$, where $S_{t}$ denotes the number of susceptibles, $I_{t}$ is the number of infecteds at time $t$
and $N$ is the total amount of entire population among whom the disease is spreading. Moreover, $\mu\geq0$ is the per capita death rate, and $\gamma\geq0$ is the cure rate of infected individuals, so $1/\gamma$ is the average infectious period. In addition, $\beta\geq0$ is the disease transmission coefficient, and $B(t)$ is a standard Brownian motion defined on a complete probability space. $\sigma$ is the variance of the number of potentially infectious contacts that a single infected individual makes with another individual over the small time interval $[t,t +\mathrm{d}t)$.

It is worth noting that it is easy to get $I(t)+S(t)=N$ by solving
$$
\mathrm{d}\big[S(t)+I(t)\big]=\big[\mu N-\mu\big( S(t)+I(t)\big)\big]\mathrm{d}t.
$$
Therefore, \eqref{(4)} can be written equivalently as follows
\begin{align}\label{2.2*}
\mathrm{d}I(t) = \big[ \eta I(t)-\beta I(t)^2\big]\mathrm{d}t
+\sigma  I(t)\big(N-I(t)\big)\mathrm{d}B(t),
\end{align}
with initial value $I(0)=I_0\in (0, N)$, where $\eta=\beta N-\mu-\gamma$,
and Gray et al. \cite{GGHMP2011} have studied a number of nice properties for the special stochastic SIS epidemic model \eqref{2.2*}. Particularly,
here we mention an important theorem from \cite{GGHMP2011}, which shows that the unique solution of \eqref{2.2*} not only exists but also remains within the bounded domain $(0,N)$.
\begin{theorem}
For any given initial value $I_0\in (0,N)$, the SDE \eqref{2.2*} has a unique global solution taking values in the domain $(0,N)$, i.e.,
$$
\mathbb{P}\big\{I(t)\in (0,N)~\forall t\geq 0\big\}=1.
$$
\end{theorem}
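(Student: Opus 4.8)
The plan is to run the standard Khasminskii non-explosion argument driven by a Lyapunov function that blows up at \emph{both} endpoints of the interval. Since the drift $b(x)=\eta x-\beta x^{2}$ and the diffusion coefficient $h(x)=\sigma x(N-x)$ in \eqref{2.2*} are polynomials, they are locally Lipschitz on $\RR$, so standard local existence-and-uniqueness theory for SDEs yields a unique maximal solution $I(t)$ on a random interval $[0,\tau_{e})$, where $\tau_{e}$ is the explosion time. It then suffices to prove that $I(t)$ cannot reach $0$ or $N$ before $\tau_{e}$ and that in fact $\tau_{e}=\infty$ almost surely.

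To this end I would fix an integer $n_{0}$ with $I_{0}\in(1/n_{0},N-1/n_{0})$ and, for $n\ge n_{0}$, set $\tau_{n}=\inf\{t\in[0,\tau_{e}):I(t)\notin(1/n,\,N-1/n)\}$. These stopping times increase to a limit $\tau_{\infty}\le\tau_{e}$, and the whole statement reduces to showing $\tau_{\infty}=\infty$ a.s. The Lyapunov function I would use is the $C^{2}$ map on $(0,N)$
\[
V(x)=-\log\frac{x}{N}-\log\Big(1-\frac{x}{N}\Big),
\]
which is bounded below (its minimum is $2\log 2>0$) and satisfies $V(x)\to+\infty$ both as $x\to0^{+}$ and as $x\to N^{-}$; this two-sided blow-up is what lets a single function control both boundaries at once.

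The heart of the argument is the generator estimate. A direct computation gives
\[
\mathcal{L}V(x):=b(x)V'(x)+\tfrac12 h(x)^{2}V''(x)=(\eta-\beta x)\,\frac{2x-N}{N-x}+\tfrac12\sigma^{2}\big(x^{2}+(N-x)^{2}\big),
\]
and I would check that $\mathcal{L}V$ is bounded above on all of $(0,N)$ by a constant $K$. This is precisely where the degeneracy of the coefficients at the endpoints is exploited: although $V'(x)$ behaves like $\mp1/x$ near $0$ and $\pm1/(N-x)$ near $N$, and $V''(x)$ like $1/x^{2}$, resp.\ $1/(N-x)^{2}$, the factors $b(x)$ and $h(x)^{2}$ vanish to matching order, so the diffusion term stays bounded while the drift term has the finite limit $-\eta$ at $x=0^{+}$ and even tends to $-\infty$ at $x=N^{-}$ when $\mu+\gamma>0$ (and is bounded when $\mu+\gamma=0$). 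Granting $\mathcal{L}V\le K$, Itô's formula applied to $V(I(t\wedge\tau_{n}))$ — legitimate since $I$ stays in the compact set $[1/n,N-1/n]$ on $[0,\tau_{n}]$, so the stochastic integral is a genuine martingale — yields after taking expectations
\[
\E V\big(I(t\wedge\tau_{n})\big)\le V(I_{0})+Kt,\qquad t\ge0.
\]

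To finish, note that on $\{\tau_{n}\le t\}$ one has $I(\tau_{n})\in\{1/n,\,N-1/n\}$ by continuity, hence $V(I(t\wedge\tau_{n}))\ge a_{n}:=\min\{V(1/n),V(N-1/n)\}$, and therefore $a_{n}\,\PP(\tau_{n}\le t)\le V(I_{0})+Kt$. Since $a_{n}\to\infty$ as $n\to\infty$, this forces $\PP(\tau_{\infty}\le t)=0$ for every $t$, so $\tau_{\infty}=\infty$ a.s.; consequently $\tau_{e}=\infty$ and $I(t)\in(0,N)$ for all $t\ge0$ with probability one. The main obstacle is the boundedness of $\mathcal{L}V$: one must produce a Lyapunov function whose singularities at $0$ and $N$ are both tamed by the vanishing of $b$ and $h$ there, and then verify that no residual blow-up survives at either endpoint; once that estimate is in hand, the rest is the routine localization wrap-up.
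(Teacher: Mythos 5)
Your argument is correct and complete: the two-sided logarithmic Lyapunov function $V(x)=-\log(x/N)-\log(1-x/N)$ does satisfy $\mathcal{L}V(x)=(\eta-\beta x)\frac{2x-N}{N-x}+\tfrac12\sigma^{2}\big(x^{2}+(N-x)^{2}\big)\le K$ on $(0,N)$ (finite limit $-\eta$ at $0^{+}$, bounded above at $N^{-}$ since $\eta-\beta N=-(\mu+\gamma)\le 0$), and the Khasminskii localization then forces $\tau_{\infty}=\infty$ a.s. The paper itself gives no proof --- it quotes this as Theorem 3.1 of \cite{GGHMP2011} --- and your argument is essentially the same stopping-time/Lyapunov strategy used there, so nothing further is needed.
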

Nevertheless, the true solution of the nonlinear SDE \eqref{2.2*} is not available, which means that looking for
well-performed numerical methods is necessary to simulate and help to observe the behavior of the underlying SIS epidemic model. The most common numerical method for SDEs with global Lipschitz continuous coefficients is classical Euler-Maruyama (EM) method. But Hutzenthaler-Jentzen-Kloeden \cite[Theorem 2.1]{16} have showed that, for SDEs whose coefficients are growing superlinearly, EM approximations would inevitably involve the $p$th moments diverging to infinity. In recent years, several modified EM schemes, including implicit schemes and explicit schemes, have been proposed for approximating the exact solutions of nonlinear SDEs, see, e.g., \cite{Szpruch2011,Li2021,Mao2013Szpruch,Tretyakov2013,Hutzenthaler12,
Hutzenthaler15,Sabanis13,Sabanis16,Wang13,LiuW13,Mao20152} and the references therein. Even so, these methods mentioned above with requiring a global monotonicity condition cannot be directly used for \eqref{2.2*}.

On the other hand, constructing appropriate numerical approximations, which can perserve the properties of the exact solution as well as reproduce the long-time behaviors for SDEs, has been one of the research focuses in numerical analysis field. For example,
Li-Yang \cite{li_yang2021} have applied a positivity-preserving truncated EM (TEM) method to the stochastic logistic model, and further realized
long-time dynamical properties, including the stochastic permanence, extinctive and stability in distribution. Recently,  Yi-Hu-Zhao \cite{Yi2021} have presented a positivity-preserving logarithmic EM scheme for general SDEs whose solutions are positive.  Mao-Wei-Wiriyakraikul \cite{Mao2021} have established a new positive preserving TEM method for
stochastic Lotka-Volterra competition model. Moreover,
  there are few results of the positivity-preserving numerical methods for stochastic SIS model \eqref{2.2*}. Specifically,  Chen-Gan-Wang \cite{CGW2021} have constructed the Lamperti smoothing truncation scheme for \eqref{2.2*}, which can preserve the domain of the original SDEs and have a convergence rate of order 1. And then Yang-Huang \cite{Yang2021} have developed a positivity preserving logarithmic EM method, which can be regarded as an improvement on the \cite{CGW2021}.
 Nonetheless, to the best of our knowledge, \cite{Yi2021,Mao2021,li_yang2021,CGW2021,Yang2021} has not yet provided the results of the long-time dynamical properties of the numerical approximation for stochastic SIS model \eqref{2.2*}. Precisely speaking, there is still no result about the numerical methods which not only preserve the positivity but also the long-time dynamical properties for stochastic SIS model \eqref{2.2*}. Thus, we in the paper are devoted to presenting an explicit positivity-preserving numerical scheme and show the extinction of the numerical method.

Inspired by the works in \cite{Alfonsi05, Neuenkirch14,CGW2021}, we make use of the Lamperti transformation to construct a type of logarithmic-type transformed numerical method for \eqref{2.2*}. To be more specific, applying the It\^{o} formula to $y(t)$ in the Lamperti-type transformation
\begin{align}\la{ts}
y(t)=\log(I(t))-\log(N-I(t))=\log\frac{I(t)}{N-I(t)},
 \end{align}
we can arrive at
 \begin{align}\la{yhf*}
\mathrm{d}y(t)=F(y(t))\mathrm{d}t+\sigma N \mathrm{d}B(t),~~~~t\geq 0,
\end{align}
with $y(0)=\log(I_0)-\log(N-I_0)$, where $F(x)=\eta-(\mu+\gamma)e^{x}+\frac{\sigma^2 N^2}{2}
-\frac{\sigma^2 N^2}{1+e^x}$.
Note that \eqref{ts} also means
\begin{align}\la{2.4*}
 I(t)=N-\frac{N}{1+e^{y(t)}}=\frac{Ne^{y(t)}}{1+e^{y(t)}}.
 \end{align}
More recently, Li-Mao-Yang \cite{Li2021} proposed a new explicit method and obtained the  asymptotic stability of the method under more relaxed conditions. However, the transformed SDE  \eqref{yhf*} is still not satisfied with some conditions in \cite{Li2021}. To deal with transformed SDE \eqref{yhf*} with exponentially growing coefficients, in this paper,  we establish a logarithmic TEM scheme, which can be transformed back to an positivity-preserving approximation of the original SDE \eqref{2.2*} according to \eqref{2.4*}. This allows us to preserve the same domain $(0,N)$ of the original SDE \eqref{2.2*} and obtain the first order rate of convergence. Moreover, one of our important contributions in our paper is the approximation of long-time dynamical properties for \eqref{2.2*}, namely, the extinction of the numerical method is given.




The rest of the paper is  structured as follows. Section \ref{n-p} introduces some preliminary results on certain
properties for the stochastic SIS model.  Section \ref{section3} constructs an explicit scheme preserving the positivity and the domain of the  exact solution,  and yields  the convergence rate of first order.  Section \ref{section5} focuses on the analysis the extinction of the numerical scheme, i.e. the explicit scheme preserving  the extinction of the exact solution of stochastic SIS model. Section \ref{s5} provides a couple of numerical examples and simulations to demonstrate the efficiency of the numerical scheme.

\section{Mathematical notations and lemmas}\label{n-p}
Throughout this paper,  we  use the following notations.  Let $( \Omega,~\cal{F}$,~$\PP )$ be a complete  probability space, and $\mathbb{E}$ denotes the expectation corresponding to $\PP$, and   $|\cdot|$ denote the Euclidean norm in $\RR$.
   For any $a, b\in \mathbb{R}$,
 $a\vee b:=\max\{a,b\}$, and $a\wedge b:=\min\{a,b\}$.
If $\mathbb{D}$ is a set, its indicator function is denoted by $I_{\mathbb{D}}$, namely $I_{\mathbb{D}}(x)=1$ if $x\in \mathbb{D}$ and $0$ otherwise. Suppose ${\{ {\cal{F}}_{t}\}} _{t \geq 0}$ is a filtration defined on this probability space satisfying the usual conditions (i.e., it is right continuous and $\mathcal{F}_0$ contains all $\mathbb{P}$-null sets) such that $B(t)$
is $ {\cal{F}}_{t}$ adapted. First,  we state some useful lemmas which can be found in \cite{CGW2021}.
\begin{lemma}[{\cite[Theorem 3.2]{CGW2021}}]\la{CGW3.2}
 For any $p\geq 0$ we have
\begin{align*}
\bigg(\sup_{t\in [0,T]}\mathbb{E}\big[I(t)^{-p}\big]\bigg)\vee
\bigg(\sup_{t\in [0,T]}\mathbb{E}\big[\big(N-I(t)\big)^{-p}\big]\bigg)\leq K_p,
\end{align*}
where
\begin{align}\la{CGW3.8}
K_p=\Big(I_0^{-p}\vee \big(N-I_0\big)^{-p}\Big)\exp{\Big(p T\big(|\beta N-\mu-\gamma|+2\beta N\big)+\frac{p(p+1)}{2}\sigma^2N^2T\Big)}.
\end{align}
\end{lemma}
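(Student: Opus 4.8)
The plan is to apply It\^{o}'s formula to the functions $x\mapsto x^{-p}$ and $x\mapsto(N-x)^{-p}$ along the solution $I(t)$ of \eqref{2.2*}, to derive from this a linear differential inequality for each of the two expectations, and then to conclude by a localization argument combined with Gronwall's inequality. Throughout one uses crucially that $\PP\{I(t)\in(0,N)\ \forall t\ge0\}=1$ (the existence theorem recalled above), so that $I(t)^{-p}$ and $(N-I(t))^{-p}$ are a.s. finite and the maps $x^{-p}$, $(N-x)^{-p}$ are $C^2$ along the trajectory, which legitimizes It\^{o}'s formula.

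First I would compute the drifts. With $\eta=\beta N-\mu-\gamma$ and $\mathrm{d}I=[\eta I-\beta I^2]\,\mathrm{d}t+\sigma I(N-I)\,\mathrm{d}B$, It\^{o}'s formula gives
\begin{align*}
\mathrm{d}\big(I(t)^{-p}\big)=\Big[pI^{-p}(\beta I-\eta)+\tfrac12 p(p+1)\sigma^2 I^{-p}(N-I)^2\Big]\mathrm{d}t-p\sigma I^{-p}(N-I)\,\mathrm{d}B(t).
\end{align*}
Since $0<I<N$ we have $\beta I-\eta<\beta N-\eta=\mu+\gamma$ and $(N-I)^2<N^2$, and the elementary inequality $\mu+\gamma\le|\beta N-\mu-\gamma|+2\beta N$ (checked by distinguishing the sign of $\beta N-\mu-\gamma$) shows the drift is at most $c_pI^{-p}$, where $c_p:=p\big(|\beta N-\mu-\gamma|+2\beta N\big)+\tfrac{p(p+1)}{2}\sigma^2N^2$. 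Similarly, using $\mathrm{d}(N-I)=[(\mu+\gamma)I-\beta I(N-I)]\,\mathrm{d}t-\sigma I(N-I)\,\mathrm{d}B$,
\begin{align*}
\mathrm{d}\big((N-I(t))^{-p}\big)&=\Big[-p(\mu+\gamma)I(N-I)^{-p-1}+p\beta I(N-I)^{-p}+\tfrac12 p(p+1)\sigma^2 I^2(N-I)^{-p}\Big]\mathrm{d}t\\
&\quad+p\sigma I(N-I)^{-p}\,\mathrm{d}B(t),
\end{align*}
where the first drift term is $\le0$ and may be discarded, while $I<N$ bounds the remaining ones by $c_p(N-I)^{-p}$.

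The main obstacle is that the stochastic integrals above are merely local martingales, since the integrand $I^{-p}(N-I)$ need not be integrable (it blows up as $I\downarrow0$), so one cannot take expectations directly. To handle this I would use the stopping times $\tau_n=\inf\{t\ge0:\,I(t)\notin(1/n,\,N-1/n)\}$; because the paths stay in $(0,N)$ they are, on each compact $[0,T]$, bounded away from $0$ and $N$, whence $\tau_n\uparrow\infty$ a.s. Integrating the two displays from $0$ to $t\wedge\tau_n$ and taking expectations makes the martingale parts disappear, so that $g_n(t):=\E\big[(I(t\wedge\tau_n))^{-p}\big]$ and $h_n(t):=\E\big[(N-I(t\wedge\tau_n))^{-p}\big]$ satisfy
\begin{align*}
g_n(t)\le I_0^{-p}+c_p\int_0^t g_n(s)\,\mathrm{d}s,\qquad h_n(t)\le (N-I_0)^{-p}+c_p\int_0^t h_n(s)\,\mathrm{d}s.
\end{align*}
Gronwall's inequality yields $g_n(t)\le I_0^{-p}e^{c_pt}$ and $h_n(t)\le(N-I_0)^{-p}e^{c_pt}$ uniformly in $n$; letting $n\to\infty$ and applying Fatou's lemma (note $I(t\wedge\tau_n)\to I(t)$ a.s.) gives $\E[I(t)^{-p}]\le I_0^{-p}e^{c_pt}$ and $\E[(N-I(t))^{-p}]\le(N-I_0)^{-p}e^{c_pt}$. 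Taking the supremum over $t\in[0,T]$ and then the maximum of the two right-hand sides produces exactly the constant $K_p$ of \eqref{CGW3.8}, finishing the proof.
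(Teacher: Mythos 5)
Your proof is correct: the It\^{o} computations for $I^{-p}$ and $(N-I)^{-p}$, the bound $\mu+\gamma\le|\beta N-\mu-\gamma|+2\beta N$, the localization via $\tau_n$, and the Gronwall--Fatou conclusion all check out and reproduce the constant $K_p$ of \eqref{CGW3.8} exactly. Note that the paper itself gives no proof here --- the lemma is quoted from \cite[Theorem 3.2]{CGW2021} --- and your argument is precisely the standard one used there (It\^{o}'s formula on the inverse moments, using that the solution stays in $(0,N)$ to justify the stopping-time argument), so there is nothing further to add.
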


\begin{lemma}[{\cite[Proposition 3.3]{CGW2021}}]\la{le2.3}
 For any $p\in \mathbb{R}$, the transformed SDE \eqref{yhf*} has the following exponential integrability property
\begin{align*}
 \sup_{t\in [0,T]}\mathbb{E}\big[e^{py(t)}\big]  \leq N^{|p|}K_{|p|},
\end{align*}
where $K_p$ is given by \eqref{CGW3.8}.
\end{lemma}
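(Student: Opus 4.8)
The plan is to exploit the explicit algebraic relation between $y(t)$ and $I(t)$ furnished by the Lamperti-type transformation \eqref{ts}, thereby reducing the claim to the inverse-moment bounds already established in Lemma \ref{CGW3.2}. Exponentiating \eqref{ts} gives $e^{y(t)}=I(t)/(N-I(t))$, and more generally $e^{py(t)}=\big(I(t)/(N-I(t))\big)^{p}$ for every $p\in\mathbb{R}$; these quantities are well defined and finite almost surely because the well-posedness theorem of Gray et al.\ quoted above guarantees $I(t)\in(0,N)$ for all $t\geq 0$ with probability one.

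Next I would split according to the sign of $p$. If $p\geq 0$, then using $0<I(t)<N$ we have $e^{py(t)}=I(t)^{p}\,(N-I(t))^{-p}\leq N^{p}\,(N-I(t))^{-p}$; taking expectations and invoking Lemma \ref{CGW3.2} with exponent $p$ yields $\mathbb{E}\big[e^{py(t)}\big]\leq N^{p}K_{p}$ uniformly for $t\in[0,T]$. If $p<0$, write $p=-|p|$, so that $e^{py(t)}=(N-I(t))^{|p|}\,I(t)^{-|p|}\leq N^{|p|}\,I(t)^{-|p|}$, again because $N-I(t)<N$; applying Lemma \ref{CGW3.2} with exponent $|p|$ then gives $\mathbb{E}\big[e^{py(t)}\big]\leq N^{|p|}K_{|p|}$. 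Combining the two cases, and noting $K_{p}=K_{|p|}$ when $p\geq 0$, produces the asserted bound $\sup_{t\in[0,T]}\mathbb{E}\big[e^{py(t)}\big]\leq N^{|p|}K_{|p|}$.

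The argument is essentially a one-line reduction to Lemma \ref{CGW3.2}, so there is no real obstacle; the only points requiring a little care are the bookkeeping of signs in the case split and the observation that Theorem~1.1 legitimizes all the pathwise manipulations (in particular that $\log\big(I(t)/(N-I(t))\big)$ makes sense). One could instead prove the estimate directly from the SDE \eqref{yhf*} by applying It\^{o}'s formula to $e^{py(t)}$ and using a one-sided bound on $pF(x)$, but this is a strictly longer route that merely reproves what Lemma \ref{CGW3.2} already packages, so I would not pursue it.
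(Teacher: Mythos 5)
Your argument is correct: the identity $e^{py(t)}=\big(I(t)/(N-I(t))\big)^{p}$ together with the trivial bounds $I(t)<N$ and $N-I(t)<N$ reduces the claim exactly to the inverse-moment estimate of Lemma \ref{CGW3.2}, and the sign split reproduces the stated constant $N^{|p|}K_{|p|}$ precisely. The paper quotes this lemma from \cite{CGW2021} without reproducing a proof, but your reduction is the standard derivation behind that cited result, so there is nothing further to add.
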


Next we propose EM method to approximate the exact solution of the SDE \eqref{yhf*}.
Given a step size $\Delta>0$ and let $t_k=k\Delta$ for any integer $k\geq 0$.
 For convenience, we will further use $C_u$ and $C$ to stand for two generic positive real constants dependent on $I_0, \beta, N, \mu, \gamma, \sigma, p$ but independent of the step size $\t$ and $k$, which we will use in this section and later sections.
Please note that the values of $C_u$ and $C$ may change between occurrences,  where
$C_u$ is dependent on $u$.  For SDE  \eqref{yhf*}  define  the EM  scheme by
\begin{align}\label{yhf_EM}
\left\{
\begin{array}{ll}
Y_0=y(0),&\\
Y^E_{k+1} = Y^E_k+F(Y^E_k)\t+\sigma N \t B_k,
\end{array}
\right.
\end{align}
for any integer $k\geq 0$, where  $\t B_k=B(t_{k+1})-B(t_{k})$. Transforming back, i.e.
\begin{align}\la{3.2*}
 I^{E,\t}_k=N-\frac{N}{1+e^{Y^E_k}}=\frac{Ne^{Y^E_k}}{1+e^{Y^E_k}},
 \end{align}
gives a strictly positive approximation of the original  SDE \eqref{2.2*}.

\begin{lemma}[{\cite[Lemma 3.2]{li_yang2021}}]\la{leB}
For any $L>0$ and integer $m\geq 0$, we have
\begin{align}\la{log:eq:5.16}
\E\Big[\mathrm{e}^{\frac{|\Delta B_k|^2}{4\Delta}}|\Delta B_k|^{3m}\Big]
 = \sqrt{\frac{2}{\pi}} \Gamma\Big(\frac{3m+1}{2}\Big)\big(4\Delta\big)^{\frac{3m}{2}},~~~~~
 \E\big[\Theta_k^m\big|\mathcal{F}_{t_k}\big]
 \leq C \Delta^{\frac{3m}{2}},
\end{align}
where $\Gamma(\cdot)$ is the Gamma function and
$
\Theta_k:=\mathrm{e}^{L |\Delta B_k|}\big( \Delta^{\frac{3}{2}}+  |\Delta B_k|^3\big).
$
\end{lemma}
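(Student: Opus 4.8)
The plan is to handle the two assertions separately, both resting on the fact that the increment $\Delta B_k=B(t_{k+1})-B(t_k)$ is a centred Gaussian random variable with variance $\Delta$ and is independent of $\mathcal{F}_{t_k}$.

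For the first identity I would simply evaluate a Gaussian integral. Inserting the density of $\Delta B_k$ and merging the two exponentials gives
\begin{align*}
\E\Big[\mathrm{e}^{\frac{|\Delta B_k|^2}{4\Delta}}|\Delta B_k|^{3m}\Big]
=\frac{1}{\sqrt{2\pi\Delta}}\int_{\RR}|x|^{3m}\,\mathrm{e}^{-\frac{x^2}{4\Delta}}\,\mathrm{d}x
=\frac{2}{\sqrt{2\pi\Delta}}\int_{0}^{\infty}x^{3m}\,\mathrm{e}^{-\frac{x^2}{4\Delta}}\,\mathrm{d}x,
\end{align*}
where it is essential that the prefactor $1/(4\Delta)$ is strictly smaller than the Gaussian rate $1/(2\Delta)$, which keeps the integral finite. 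The substitution $u=x^{2}/(4\Delta)$ turns the remaining integral into $(2\sqrt{\Delta})^{3m}\sqrt{\Delta}\,\Gamma\!\big(\tfrac{3m+1}{2}\big)$, and collecting the constant $2/\sqrt{2\pi}=\sqrt{2/\pi}$ with $2^{3m}=4^{3m/2}$ reproduces the claimed closed form.

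For the second estimate, since $\Theta_k$ is a deterministic function of $\Delta B_k$ alone, independence gives $\E[\Theta_k^m\mid\mathcal{F}_{t_k}]=\E[\Theta_k^m]=\E\big[\mathrm{e}^{mL|\Delta B_k|}(\Delta^{3/2}+|\Delta B_k|^{3})^{m}\big]$. The key step is to absorb the linear exponential factor into the quadratic one: by the elementary inequality $ab\le\varepsilon a^{2}+\tfrac{b^{2}}{4\varepsilon}$ with $a=|\Delta B_k|$, $b=mL$, $\varepsilon=1/(4\Delta)$, one gets
\begin{align*}
mL|\Delta B_k|\le\frac{|\Delta B_k|^{2}}{4\Delta}+m^{2}L^{2}\Delta,
\qquad\text{hence}\qquad
\mathrm{e}^{mL|\Delta B_k|}\le\mathrm{e}^{m^{2}L^{2}\Delta}\,\mathrm{e}^{\frac{|\Delta B_k|^{2}}{4\Delta}}.
\end{align*}
Since $\Delta$ varies in a bounded range (say $\Delta\in(0,1]$), the factor $\mathrm{e}^{m^{2}L^{2}\Delta}$ is bounded by the $\Delta$-independent constant $\mathrm{e}^{m^{2}L^{2}}$. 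Expanding $(\Delta^{3/2}+|\Delta B_k|^{3})^{m}$ by the binomial theorem and applying the first identity to each of the finitely many terms $\Delta^{3(m-j)/2}\,\E[\mathrm{e}^{|\Delta B_k|^{2}/(4\Delta)}|\Delta B_k|^{3j}]$, $j=0,\dots,m$, shows that each term is a constant multiple of $\Delta^{3(m-j)/2}\cdot\Delta^{3j/2}=\Delta^{3m/2}$; summing the finitely many constants yields $\E[\Theta_k^m\mid\mathcal{F}_{t_k}]\le C\Delta^{3m/2}$ with $C$ depending only on $m$ and $L$.

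I do not expect a real obstacle here; the only delicate point is the bookkeeping of constants. One must check that the exponent $1/4$ occurring in $\Theta_k$ (and in the first identity) is small enough relative to the Gaussian rate $1/2$ for every integral to converge, and that the linear-to-quadratic trade-off in Young's inequality leaves behind the constant $\mathrm{e}^{m^{2}L^{2}\Delta}$, which stays bounded as $\Delta\to 0$. Everything else is a routine Gamma-function computation together with a binomial expansion.
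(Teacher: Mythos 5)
Your proof is correct, and it is the standard argument for this lemma (which the paper itself only cites from \cite[Lemma 3.2]{li_yang2021} without reproducing a proof): the exact Gaussian integral via the substitution $u=x^2/(4\Delta)$, independence of $\Delta B_k$ from $\mathcal{F}_{t_k}$ to drop the conditioning, Young's inequality $mL|\Delta B_k|\le \tfrac{|\Delta B_k|^2}{4\Delta}+m^2L^2\Delta$ to absorb the linear exponential into the quadratic one, and a binomial expansion in which every term scales as $\Delta^{3m/2}$. All constants and exponents check out, so nothing further is needed.
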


\begin{lemma}[{\cite[Theorem 2.1]{Yang2021}}]\la{SDE_log_th2}
For any $p>0$ there exists a constant $C>0$  such that
\begin{align}\la{*}
\mathbb{E}\bigg[\sup_{k=0,\ldots, \lfloor T/\Delta\rfloor} |Y^E_{k}-y(t_{k})|^{p}\bigg]
\leq& C_T\Delta^{p}
\end{align}
for any $\Delta\in(0, 1]$ and $T>0$,  where $\lfloor T/\Delta\rfloor$  represents the integer part of $T/\Delta$.
\end{lemma}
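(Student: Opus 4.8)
The plan rests on two structural facts about the transformed equation \eqref{yhf*}. First, the drift $F$ is \emph{bounded above}: since $(\mu+\gamma)e^{x}>0$ and $\sigma^{2}N^{2}/(1+e^{x})>0$, we have $F(x)\le\eta+\tfrac{\sigma^{2}N^{2}}{2}=:F_{\max}$ for all $x$, and it is \emph{one-sided Lipschitz}, because $F'(x)=-(\mu+\gamma)e^{x}+\sigma^{2}N^{2}\frac{e^{x}}{(1+e^{x})^{2}}\le\tfrac{\sigma^{2}N^{2}}{4}=:L_0$. Second, the diffusion coefficient $\sigma N$ is constant; this additivity is what makes the attainable rate $1$ rather than $\tfrac12$, since then the Milstein correction vanishes and the one-step error is purely pathwise. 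As a preliminary step (not contained in the excerpt) I would establish uniform exponential moments for the scheme \eqref{yhf_EM}: conditioning one step on $\mathcal{F}_{t_k}$ and using $F(Y^E_k)\le F_{\max}$ together with $\mathbb{E}[e^{p\sigma N\Delta B_k}\mid\mathcal{F}_{t_k}]=e^{p^{2}\sigma^{2}N^{2}\Delta/2}$ gives $\mathbb{E}[e^{pY^E_{k+1}}\mid\mathcal{F}_{t_k}]\le e^{pY^E_k}e^{c_p\Delta}$, and iterating yields $\sup_{0\le k\le\lfloor T/\Delta\rfloor}\mathbb{E}[e^{pY^E_k}]\le C_{p,T}$ for every $p>0$. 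Only positive powers of $e^{x}$ enter the analysis, because $F,F',F''$ are all dominated by $C(1+e^{x})$; together with Lemma \ref{le2.3} this controls every evaluation of $F$ and its derivatives below, and yields the uniform a priori bounds $\sup_{k}\mathbb{E}[|Y^E_k-y(t_k)|^{q}]\le C_{q,T}$ for all $q>0$.

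Next I introduce the interpolation $\bar Y(t):=Y^E_{\lfloor t/\Delta\rfloor}$ and $Y(t):=y(0)+\int_{0}^{t}F(\bar Y(s))\,\mathrm{d}s+\sigma N B(t)$, so that $Y(t_k)=Y^E_k$. Because the noise cancels, the error $u(t):=y(t)-Y(t)$ is an absolutely continuous random path with $u(0)=0$ and $u(t)=\int_{0}^{t}\big(F(y(s))-F(\bar Y(s))\big)\,\mathrm{d}s$. I treat $p=2$ first (for $p\ge2$ replace $u$ by $|u|^{p-2}u$; $0<p<2$ follows by Jensen). Differentiating $|u(t)|^{2}$ and splitting $F(y(s))-F(\bar Y(s))=\big(F(y(s))-F(Y(s))\big)+\big(F(Y(s))-F(Y^E_k)\big)$ on $[t_k,t_{k+1})$, the one-sided Lipschitz property bounds the first bracket's contribution by $2L_0\int_{0}^{t}|u(s)|^{2}\,\mathrm{d}s$, while a second-order Taylor expansion about $Y^E_k$,
\[
F(Y(s))-F(Y^E_k)=F'(Y^E_k)\big(F(Y^E_k)(s-t_k)+\sigma N(B(s)-B(t_k))\big)+\tfrac12F''(\zeta_s)\big(Y(s)-Y^E_k\big)^{2},
\]
with $\zeta_s$ between $Y^E_k$ and $Y(s)$, exhibits the second bracket — after using $|Y(s)-Y^E_k|\le C(1+e^{Y^E_k})\Delta+\sigma N|B(s)-B(t_k)|$ and $e^{Y(s)}\le e^{Y^E_k}e^{F_{\max}^{+}\Delta}e^{\sigma N|B(s)-B(t_k)|}$ to dominate $F'$ and $F''$ — as the sum of a term carrying $\int_{t_k}^{t_{k+1}}(B(s)-B(t_k))\,\mathrm{d}s$ (conditionally centred given $\mathcal{F}_{t_k}$, of order $\Delta^{3/2}$) plus terms that are pointwise of order $\Delta$ and whose coefficients have uniformly bounded moments.

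Collecting, $|u(t)|^{2}\le 2L_0\int_{0}^{t}|u(s)|^{2}\,\mathrm{d}s+\mathcal M(t)+\mathcal R(t)$, where $\mathcal M$ is built from a martingale-difference sequence whose $k$-th increment is, to leading order, proportional to $u(t_k)F'(Y^E_k)\sigma N\int_{t_k}^{t_{k+1}}(B(s)-B(t_k))\,\mathrm{d}s$ — the corrections from not freezing the slowly-varying factors at $t_k$ being themselves $O(\Delta^{3/2})$ per step and absorbed into $\mathcal R$ — and $\mathcal R$ collects the order-$\Delta$ pointwise terms. One bounds $\mathcal R$ in $L^{1}$ by conditioning on $\mathcal{F}_{t_k}$ and applying Cauchy–Schwarz or Young: the $\mathcal{F}_{t_k}$-measurable exponential factors are absorbed by the moment bounds of the first paragraph, the Brownian factors of type $e^{c|\Delta B_k|}(\Delta+|\Delta B_k|)^{m}$ by Lemma \ref{leB}, giving $\mathbb{E}[\sup_{t\le T}|\mathcal R(t)|]\le C\Delta^{2}$. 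For $\mathcal M$, the discrete Burkholder–Davis–Gundy inequality reduces matters to the sum of the expected squared increments; since $\mathbb{E}[(\int_{t_k}^{t_{k+1}}(B(s)-B(t_k))\,\mathrm{d}s)^{2}\mid\mathcal{F}_{t_k}]=\tfrac13\Delta^{3}$ and $u(t_k)F'(Y^E_k)$ has uniformly bounded second moment (by the a priori bounds, with no smallness needed), this sum is $O((T/\Delta)\cdot\Delta^{3})=O(\Delta^{2})$, so $\mathbb{E}[\sup_{t\le T}|\mathcal M(t)|]\le C\Delta^{2}$. A Grönwall argument in $t\mapsto\mathbb{E}[\sup_{s\le t}|u(s)|^{2}]$ then gives $\mathbb{E}[\sup_{t\le T}|u(t)|^{2}]\le C_T\Delta^{2}$, and restricting to $t=t_k$ is precisely \eqref{*} with $p=2$.

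The main obstacle I expect is the bookkeeping behind the last two paragraphs. Because $F$, $F'$ and $F''$ grow like $e^{x}$, every term in the local-error expansion carries a factor $e^{cY^E_k}$, and this is entangled with $e^{c|B(s)-B(t_k)|}$ through the intermediate point $Y(s)$; separating them requires repeatedly conditioning on $\mathcal{F}_{t_k}$ and splitting off the Gaussian part (controlled by Lemma \ref{leB}) from the exponential part (controlled by the uniform moment bound), organised so that every estimate closes. Equally delicate is retaining the true $\Delta^{3/2}$ size of the conditionally-centred increments instead of the naive $\Delta^{1/2}$ one gets by estimating $|B(s)-B(t_k)|$ pathwise, since it is exactly this cancellation, afforded by the additive structure of the noise, that upgrades the convergence rate from one-half to one.
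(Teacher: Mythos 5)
First, note that the paper does not prove this lemma at all: it is quoted verbatim from \cite[Theorem 2.1]{Yang2021}, so there is no internal proof to compare against. Your overall architecture (upper-boundedness and one-sided Lipschitz continuity of $F$, uniform exponential moments of $Y^E_k$ via $Y^E_k\le Y_0+F_{\max}k\Delta+\sigma N B(t_k)$, the continuous interpolation with cancelling noise, Taylor expansion of the one-step drift error, and exploitation of the conditionally centred increment $\int_{t_k}^{t_{k+1}}(B(s)-B(t_k))\,\mathrm{d}s$) is sound and is indeed the standard route for Lamperti-transformed schemes with additive noise.

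There is, however, a genuine quantitative gap in your treatment of the martingale term $\mathcal M$, and it sits exactly where you claim ``no smallness needed.'' With $\xi_k=2u(t_k)F'(Y^E_k)\sigma N\int_{t_k}^{t_{k+1}}(B(s)-B(t_k))\,\mathrm{d}s$, the discrete BDG (or Doob) inequality gives
\begin{equation*}
\mathbb{E}\Big[\sup_n\Big|\sum_{k<n}\xi_k\Big|\Big]\;\leq\; C\Big(\sum_k\mathbb{E}\big[\xi_k^2\big]\Big)^{1/2},
\end{equation*}
not $C\sum_k\mathbb{E}[\xi_k^2]$. If $\mathbb{E}[u(t_k)^2F'(Y^E_k)^2]$ is merely bounded by a constant, then $\sum_k\mathbb{E}[\xi_k^2]=O((T/\Delta)\Delta^{3})=O(\Delta^{2})$ and the maximal inequality yields only $\mathbb{E}[\sup_t|\mathcal M(t)|]\leq C\Delta$, not $C\Delta^{2}$. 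Fed into your Gr\"onwall argument this gives $\mathbb{E}[\sup_{t\le T}|u(t)|^{2}]\leq C\Delta$, i.e.\ strong order $1/2$ rather than $1$. (A bootstrap does not close the gap either: each pass only improves the rate from $r$ to $(1+r)/2$.) The repair is standard but is precisely the ``smallness'' you disclaim: either keep $\sup_k|u(t_k)|$ as a factor inside the square root, estimate $\mathbb{E}\big[\sup_k|u(t_k)|\,(\sum_k F'(Y^E_k)^2 I_k^2)^{1/2}\big]\leq \epsilon\,\mathbb{E}[\sup_k|u(t_k)|^2]+C_\epsilon\Delta^{2}$ by Young, and absorb the $\epsilon$-term into the left-hand side; or run a two-stage argument, first proving $\sup_k\mathbb{E}[|u(t_k)|^{2q}]\leq C\Delta^{2q}$ (where the martingale contributes nothing because its expectation vanishes) and then inserting this pointwise bound via Cauchy--Schwarz into $\mathbb{E}[\xi_k^2]$ to get $\sum_k\mathbb{E}[\xi_k^2]=O(\Delta^{4})$ and hence $\mathbb{E}[\sup_t|\mathcal M(t)|]\leq C\Delta^{2}$. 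With either fix the rest of your plan (the $O(\Delta^{2})$ remainder, the Jensen reduction for $p<2$, the power substitution for $p>2$) goes through.
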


\begin{rem}
Although Lemma \ref{SDE_log_th2} shows that the logarithmic EM method can strongly converge to the exact solution of stochastic SIS model \eqref{2.2*}, which can be found in \cite[Theorem 2.2]{Yang2021}, it has not yet provided the results about the approximation of the long-time dynamical properties for stochastic SIS model \eqref{2.2*}.
The main reason is that the diffusion term and nonlinear term of SDE \eqref{yhf_EM} will lead to extraordinary values. In other words, a numerical method which can efficiently prevent producing the extraordinary values would be able to preserve the properties of the exact solution on the basis of Taylor expansion. Thus, in the next section, we define the explicit scheme by using an appropriate truncation map.
\end{rem}
\section{Convergence rate of the logarithmic TEM method}\la{section3}
In this section, we aim to construct a logarithmic  TEM method, which can preserve the positivity of
the original stochastic SIS model \eqref{2.2*}. Moreover,  this allows us to preserve the same domain $(0,N)$ of the original stochastic SIS model \eqref{2.2*} and obtain the first order rate of convergence.
 For any given stepsize $\Delta\in(0, 1)$, define a  TEM   scheme by
\begin{align}\label{**}
\left\{\begin{aligned}
\bar{Y}_{k+1}&= Y_k+F(Y_k)\t+\sigma N \t B_k,\\
Y_{k+1}&= \bar{Y}_{k+1}\wedge \log(K\t^{-\frac{1}{2}}),
\end{aligned}\right.
\end{align}
for any integer $k\geq0$, where  $K>1+e^{Y_0}$.
 Transforming back, i.e.
\begin{align}\la{4.2*}
\bar{I}^{\t}_k=N-\frac{N}{1+e^{\bar{Y}_k}}=\frac{Ne^{\bar{Y}_k}}{1+e^{\bar{Y}_k}},~~~~I_k^{\t}=N-\frac{N}{1+e^{Y_k}}=\frac{N e^{Y_k}}{1+e^{Y_k}},
\end{align}
and $\bar{I}^{\t}_k$ gives a strictly positive approximation of the original  SDS \eqref{2.2*}. Lemmas \ref{log_le3.1} and \ref{xinle1} will play their important
roles in the proof of  the strong convergence of the numerical solutions.

\begin{lemma}\la{log_le3.1}
For any $p>0$, the TEM scheme defined by \eqref{**} has the property that
\begin{align}
\sup_{\Delta\in (0, 1)}\sup_{0\leq k\leq \lfloor T/\Delta\rfloor}\mathbb{E}\big[\mathrm{e}^{pY_{k}}\big]
\leq \sup_{\Delta\in (0, 1)}\sup_{0\leq k\leq \lfloor T/\Delta\rfloor}\mathbb{E}\big[\mathrm{e}^{p\bar{Y}_{k}}\big]
\leq  C_T
\end{align}
for any $T>0$.
\end{lemma}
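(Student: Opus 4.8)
The plan is to reduce the statement to a one-step estimate and then iterate. The first inequality needs nothing: by construction $Y_k=\bar Y_k\wedge\log(K\Delta^{-1/2})\le\bar Y_k$ for every $k$ (reading $\bar Y_0:=Y_0=y(0)$), and since $t\mapsto e^{pt}$ is nondecreasing for $p>0$ we get $\E[e^{pY_k}]\le\E[e^{p\bar Y_k}]$ for each $k$ and each $\Delta$, so it suffices to bound $\sup_{\Delta\in(0,1)}\sup_{0\le k\le\lfloor T/\Delta\rfloor}\E[e^{p\bar Y_k}]$. The key structural remark is that although $F$ grows exponentially it does so only \emph{downwards}: since $-(\mu+\gamma)e^{x}\le0$ and $-\sigma^2N^2/(1+e^x)\le0$, one has the uniform upper bound $F(x)\le L_0:=\eta+\tfrac12\sigma^2N^2$ for all $x\in\RR$, while $F(Y_k)>-\infty$ a.s.\ because $Y_k\le\log(K\Delta^{-1/2})<\infty$. (The precise truncation level plays no role in this particular estimate; it enters the convergence-rate analysis.)

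Next I would carry out the one-step conditioning. From $\bar Y_{k+1}=Y_k+F(Y_k)\Delta+\sigma N\,\Delta B_k$ we have the multiplicative identity $e^{p\bar Y_{k+1}}=e^{pY_k}\,e^{pF(Y_k)\Delta}\,e^{p\sigma N\,\Delta B_k}$. Here $Y_k$, hence $F(Y_k)$, is $\mathcal F_{t_k}$-measurable, and the increment $\Delta B_k$ is Gaussian with mean $0$ and variance $\Delta$, independent of $\mathcal F_{t_k}$, so $\E[e^{p\sigma N\,\Delta B_k}\mid\mathcal F_{t_k}]=e^{\frac12 p^2\sigma^2N^2\Delta}$. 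Combining this with $F(Y_k)\le L_0$ gives
\begin{align*}
\E\big[e^{p\bar Y_{k+1}}\mid\mathcal F_{t_k}\big]
=e^{pY_k}\exp\!\Big(p\Delta\big(F(Y_k)+\tfrac12 p\sigma^2N^2\big)\Big)
\le e^{c\Delta}\,e^{pY_k},\qquad c:=p\big(L_0+\tfrac12 p\sigma^2N^2\big).
\end{align*}
Since all the random variables involved are nonnegative, taking expectations via the tower property is legitimate, and using $e^{pY_k}\le e^{p\bar Y_k}$ yields the recursion $\E[e^{p\bar Y_{k+1}}]\le e^{c\Delta}\,\E[e^{p\bar Y_k}]$ valid for every $k\ge0$, with finite starting value $\E[e^{p\bar Y_0}]=e^{py(0)}$.

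Finally I would iterate this discrete Gronwall-type inequality to get $\E[e^{p\bar Y_k}]\le e^{ck\Delta}e^{py(0)}$, and for $k\le\lfloor T/\Delta\rfloor$ one has $k\Delta\le T$, hence $\E[e^{p\bar Y_k}]\le e^{\max\{c,0\}\,T}e^{py(0)}=:C_T$, a bound independent of both $\Delta\in(0,1)$ and $k$. Taking the suprema and recalling the first inequality completes the argument. I do not expect a real obstacle here: the only point requiring a moment's thought is the uniform upper bound on $F$, which is exactly what keeps the exponential moment finite in the absence of a global monotonicity condition; the remainder is just the Gaussian moment generating function followed by an elementary iteration.
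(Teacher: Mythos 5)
Your proposal is correct and rests on exactly the same key observation as the paper, namely the uniform upper bound $F(x)\le \eta+\tfrac12\sigma^2N^2$ obtained by discarding the negative terms of $F$, combined with the Gaussian moment generating function. The only cosmetic difference is bookkeeping: the paper telescopes the pathwise inequality $\bar Y_{k}\le Y_0+(\eta+\tfrac12\sigma^2N^2)k\Delta+\sum_{i=0}^{k-1}\sigma N\,\Delta B_i$ and applies the MGF once to the Gaussian sum, whereas you condition step by step and iterate the resulting recursion; both yield the same bound.
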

\begin{proof}
We know that
\begin{align*}
\bar{Y}_{k+1}\leq Y_{k}+ \big(\eta+\frac{\sigma^2 N^2}{2}\big)
\Delta +\sigma N \Delta B_k\leq \bar{Y}_{k}+ \big(\eta+\frac{\sigma^2 N^2}{2}\big)
\Delta +\sigma N \Delta B_k.
\end{align*}
Then
 $$
\bar{Y}_{k}\leq  Y_0+\big(\eta+\frac{\sigma^2 N^2}{2}\big) k\Delta
  +\sum_{i=0}^{k-1} \sigma N\Delta B_{i}
$$
for any integer $k\geq 1$. Thus, for any $p>0$  we have
\begin{align*}
\mathbb{E}\big[\mathrm{e}^{p\bar{Y}_{k}}\big]
\leq&\mathbb{E}\Big[\exp\Big(p Y_0 +p\big(\eta+\frac{\sigma^2 N^2}{2}\big)T+p \sum_{i=0}^{k-1} \sigma N\Delta B_{i}\Big)\Big]\nn\\
 \leq & \exp\Big(p Y_0 +p\big(\eta+\frac{\sigma^2 N^2}{2}\big)T
+\frac{p^2\sigma^2N^2}{2}T\Big).
\end{align*}
The proof is complete.
\end{proof}


\begin{lemma}\la{xinle1}
For any $p> 0$, the TEM scheme defined by \eqref{**} has the property that
\begin{align}
\sup_{\Delta\in (0, 1)}\sup_{0\leq k\leq \lfloor T/\Delta\rfloor}\mathbb{E}\big[\mathrm{e}^{-pY_{k}}\big]
\leq  C_T
\end{align}
for any $\Delta\in(0, 1]$ and $T>0$.
\end{lemma}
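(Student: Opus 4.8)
The plan is to get the bound from a one-step, Gr\"onwall-type estimate $\E[e^{-pY_{k+1}}\mid\mathcal{F}_{t_k}]\leq(1+C\Delta)e^{-pY_k}+C\Delta$ with $C$ independent of $\Delta$ and $k$, and then iterate. First come two reductions. It suffices to treat $p\geq 2$: once the bound holds for every such $p$, Jensen's inequality for the concave map $t\mapsto t^{q/p}$ gives a bound of the same type for $\E[e^{-qY_k}]$ for all $q\in(0,p)$. Next I would record the single fact that makes the truncation relevant for a lower-tail bound: since $Y_{j}=\bar{Y}_{j}\wedge\log(K\Delta^{-1/2})$ for $j\geq1$ while $e^{Y_0}<K<K\Delta^{-1/2}$ (because $K>1+e^{Y_0}$ and $\Delta<1$), every iterate obeys $e^{Y_j}\leq K\Delta^{-1/2}$.

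For the one-step estimate I would pass from $Y_{k+1}$ to $\bar{Y}_{k+1}$ first: as $t\mapsto e^{-pt}$ is decreasing, $e^{-pY_{k+1}}=\max\{e^{-p\bar{Y}_{k+1}},(K\Delta^{-1/2})^{-p}\}\leq e^{-p\bar{Y}_{k+1}}+K^{-p}\Delta^{p/2}\leq e^{-p\bar{Y}_{k+1}}+K^{-p}\Delta$, using $p\geq2$ and $\Delta<1$ in the last step. Since $\bar{Y}_{k+1}=Y_k+F(Y_k)\Delta+\sigma N\Delta B_k$ and $\Delta B_k$ is a mean-zero Gaussian of variance $\Delta$ independent of $\mathcal{F}_{t_k}$, the conditional moment generating function gives
\[
\E\big[e^{-p\bar{Y}_{k+1}}\mid\mathcal{F}_{t_k}\big]=e^{-pY_k}\exp\!\Big(-pF(Y_k)\Delta+\tfrac12 p^2\sigma^2N^2\Delta\Big).
\]
Writing out $F$, the exponent beyond $-pY_k$ equals $\Delta\big(p(\mu+\gamma)e^{Y_k}+r_k\big)$, where $r_k=-p\eta-\tfrac12 p\sigma^2N^2+\tfrac12 p^2\sigma^2N^2+\tfrac{p\sigma^2N^2}{1+e^{Y_k}}$ is bounded by a constant $b$ uniformly in $k$ and $\Delta$. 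Bounding $e^{b\Delta}\leq1+(e^b-1)\Delta$ on $[0,1]$, this yields $\E[e^{-p\bar{Y}_{k+1}}\mid\mathcal{F}_{t_k}]\leq(1+C\Delta)\,u^{-p}e^{c\Delta u}$ with $u:=e^{Y_k}>0$ and $c:=p(\mu+\gamma)\geq0$.

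The technical heart is then the elementary inequality
\[
u^{-p}e^{c\Delta u}\leq\big(1+ce^{cK}\Delta\big)u^{-p}+ce^{cK}\Delta\qquad\text{for all }u>0,\ \Delta\in(0,1),
\]
which is precisely where $e^{Y_j}\leq K\Delta^{-1/2}$ is used: it forces $c\Delta u\leq cK\sqrt\Delta\leq cK$, hence $e^{c\Delta u}-1\leq c\Delta u\,e^{cK}$, and since $p\geq1$ one has $u\leq1+u^p$, so $e^{c\Delta u}-1\leq ce^{cK}\Delta(1+u^p)$; multiplying through by $u^{-p}$ gives the claim. Chaining the last three estimates produces $\E[e^{-pY_{k+1}}\mid\mathcal{F}_{t_k}]\leq(1+C\Delta)e^{-pY_k}+C\Delta$; taking expectations, writing $a_k=\E[e^{-pY_k}]$ and noting $a_{k+1}+1\leq(1+C\Delta)(a_k+1)$, one concludes $a_k\leq(1+C\Delta)^k(a_0+1)\leq e^{CT}(e^{-pY_0}+1)=:C_T$ for $0\leq k\leq\lfloor T/\Delta\rfloor$, which is the assertion. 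I expect the obstacle to be conceptual rather than computational: the truncation caps $Y_k$ only from above, so at first glance it is useless for a lower-tail bound; the point is that this cap keeps $F(Y_k)\Delta$ from being more negative than order $\sqrt\Delta$, and the resulting $O(\sqrt\Delta)$ defect in the drift is reabsorbed, when $Y_k$ is large, by the smallness of $e^{-pY_k}$ through the displayed inequality, restoring exactly the $1+O(\Delta)$ multiplicative factor that Gr\"onwall requires.
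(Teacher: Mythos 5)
Your proof is correct, and it takes a genuinely different route from the paper's. The paper proceeds by a third-order Taylor expansion of $\mathrm{e}^{-\bar{Y}_{k+1}}$ about $Y_k$, controls the remainder $\mathrm{e}^{|\bar{Y}_{k+1}-Y_k|}|\bar{Y}_{k+1}-Y_k|^3$ via the moment bounds of Lemma \ref{leB}, then expands $(1+\mathrm{e}^{-\bar{Y}_{k+1}})^{p}\leq(1+\mathrm{e}^{-Y_k})^p(1+\zeta_k)^p$ using a cited polynomial-expansion lemma and estimates the conditional moments of $\zeta_k$ term by term; finally it passes from $\bar{Y}_k$ back to $Y_k$ by splitting on the event $\{\bar{Y}_k\leq\log(K\Delta^{-1/2})\}$ and invoking Chebyshev together with Lemma \ref{log_le3.1}. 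You instead exploit the fact that the transformed SDE \eqref{yhf*} has \emph{constant} diffusion coefficient $\sigma N$, so that conditionally on $\mathcal{F}_{t_k}$ the increment is Gaussian and $\mathbb{E}[\mathrm{e}^{-p\bar{Y}_{k+1}}\mid\mathcal{F}_{t_k}]$ is given exactly by the moment generating function; the only genuinely problematic term, $\exp(p(\mu+\gamma)\mathrm{e}^{Y_k}\Delta)$, is then tamed by the upper truncation $\mathrm{e}^{Y_k}\leq K\Delta^{-1/2}$ and your elementary inequality $u^{-p}\mathrm{e}^{c\Delta u}\leq(1+c\mathrm{e}^{cK}\Delta)u^{-p}+c\mathrm{e}^{cK}\Delta$, and the truncation step is absorbed by the crude bound $\mathrm{e}^{-pY_{k+1}}\leq\mathrm{e}^{-p\bar{Y}_{k+1}}+K^{-p}\Delta^{p/2}$ (which is where your reduction to $p\geq2$ is genuinely needed, since the accumulated error over $\lfloor T/\Delta\rfloor$ steps is $TK^{-p}\Delta^{p/2-1}$). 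What your approach buys is a shorter and more transparent argument that avoids both the remainder-term machinery of Lemma \ref{leB} and the external polynomial lemma, at the price of being specific to additive (Lamperti-transformed) noise; the paper's Taylor/moment method is heavier but would survive a state-dependent diffusion coefficient. All the individual steps you use check out: the measurability of $Y_k$ with respect to $\mathcal{F}_{t_k}$, the independence of $\Delta B_k$, the uniform boundedness of your $r_k$ (since $0\leq\sigma^2N^2/(1+\mathrm{e}^{Y_k})\leq\sigma^2N^2$), the validity of $\mathrm{e}^{Y_j}\leq K\Delta^{-1/2}$ for $j=0$ as well as $j\geq1$, and the Jensen reduction from $p\geq2$ to general $p>0$.
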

\begin{proof}
Using the well-known  Taylor formula we get
\begin{align*}
\mathrm{e}^{-\bar{Y}_{k+1}}\leq& \mathrm{e}^{-Y_{k}}-\mathrm{e}^{-Y_{k}}(\bar{Y}_{k+1}-Y_{k})+\frac{1}{2}\mathrm{e}^{-Y_{k}}(\bar{Y}_{k+1}-Y_{k})^2
+\frac{1}{6}\mathrm{e}^{-Y_{k}}
\mathrm{e}^{|\bar{Y}_{k+1}-Y_{k}|}|\bar{Y}_{k+1}-Y_{k}|^3,
\end{align*}
where $|\bar{Y}_{k+1}-Y_k|
\leq   C\t + (\mu+\gamma) e^{Y_k}\t +|\sigma N|  |\t B_k|
\leq  C \t^{\frac{1}{2}}+|\sigma N| |\t B_k|$, and
\begin{align}\la{eqyhf4.5}
|\bar{Y}_{k+1}-Y_k|^3=&|F(Y_k)\t+\sigma N \t B_k|^3\leq 4\big(|F(Y_k)|^3\t^3+|\sigma N|^3 |\t B_k|^3\big)\nn\\
\leq&  4\big(4|\eta+0.5 \sigma^2 N^2|^3\t^3+4(\mu+\gamma)^3e^{3Y_k}\t^3+|\sigma N|^3 |\t B_k|^3\big)\nn\\
\leq&  4\big(4|\eta+0.5 \sigma^2N^2|^3\t^3+4K^3(\mu+\gamma)^3 \t^{3/2}+|\sigma N|^3 |\t B_k|^3\big).
\end{align}
 Then using the above inequality,   we have
\begin{align*}
\mathrm{e}^{-\bar{Y}_{k+1}}\leq& \mathrm{e}^{-Y_{k}}\Big[1-(\bar{Y}_{k+1}-Y_{k})
+\frac{1}{2}(\bar{Y}_{k+1}-Y_{k})^2
 +C\exp{\big(C \t^{\frac{1}{2}}+|\sigma N| |\t B_k| \big)}\big(\t^{\frac{3}{2}}+|\t B_k|^3\big)\Big]\nn\\
 \leq& \mathrm{e}^{-Y_{k}}\Big[1-(\bar{Y}_{k+1}-Y_{k})
+\frac{1}{2}(\bar{Y}_{k+1}-Y_{k})^2
 +C\exp{\big(|\sigma N| |\t B_k|\big)}\big(\t^{\frac{3}{2}}+|\t B_k|^3\big)\Big]\nn\\
 =&\mathrm{e}^{-Y_{k}}\Big[1-F(Y_k)\t-\sigma N \t B_k+\frac{1}{2}\big(F(Y_k)\t+\sigma N \t B_k\big)^2+U_k\Big]\nn\\
\leq&\mathrm{e}^{-Y_{k}}\Big[1-F(Y_k)\t-\sigma N \t B_k+ \big|F(Y_k)\big|^2\t^2+|\sigma N|^2 \big|\t B_k\big|^2+U_k\Big]\nn\\
\leq&\mathrm{e}^{-Y_{k}}\Big[1+\big((\mu+\gamma)e^{Y_k}-\eta\big)\t-\sigma N \t B_k+ C\t^2+K^2(\mu+\gamma)\t+|\sigma N|^2 \big|\t B_k\big|^2+U_k\Big],
\end{align*}
where
$$
U_k=C\exp\big( |\sigma N| |\Delta B_k|\big)\big( \Delta^{\frac{3}{2}}+  |\Delta B_k|^3\big),
$$
by Lemma \ref{leB} one observes that
\begin{align}\la{log:eq:5.16}
\E\big[U_k^m\big|\mathcal{F}_{t_k}\big]
 \leq C \Delta^{\frac{3m}{2}}.
\end{align}
Then
\begin{align}\la{logeq:5.45}
(1+\mathrm{e}^{-\bar{Y}_{k+1}})^{p}\leq (1+\mathrm{e}^{-Y_{k}})^{p}(1+\zeta_k)^{p},
\end{align}
where
\begin{align*}
\zeta_k=& \frac{\mathrm{e}^{-Y_{k}}}{1+\mathrm{e}^{-Y_{k}}}\Big[ (\mu+\gamma)e^{Y_k}\t+\big(K^2(\mu+\gamma)-\eta\big)\t+C \Delta^{2}
+ |\sigma N|^2  (\Delta B_k)^2 -\sigma N \Delta B_k+U_k\Big],
\end{align*}
and we can see that $\zeta_k>-1$.
For the given constant $p>2$, choose an integer $m$ such that $2m<p\leq2(m+1)$. It follows from \cite[Lemma 3.3]{li2018jde} and \eqref{logeq:5.45} that
\begin{align}\la{logeq:5.46}
&\E \Big[\big(1+\mathrm{e}^{-\bar{Z}_{k+1}}\big)^{p}  \big|\mathcal{F}_{t_k}\Big]\nn\\
\leq&\big(1+\mathrm{e}^{-Z_{k}}\big)^{p} \bigg\{1+ p \E\big[\zeta_k\big|\mathcal{F}_{t_k}\big] + \frac{p(p-1)}{2} \E\big[\zeta_k^2\big|\mathcal{F}_{t_k}\big] + \mathbb{E}\big[P_m(\zeta_k)\zeta_k^3 \big|\mathcal{F}_{t_k}\big]\bigg\},
\end{align}
where $P_m(x)$ represents a $m$th-order polynomial of $x$  with coefficients depending only on $p$, and $m$ is an
integer.  Noticing that the increment $\t B_k$ is independent of $\mathcal{F}_{t_k}$,  we
derive that
\begin{align}\la{properties}
\mathbb{E}\big[(\t B_k)^{2j-1}\big|\mathcal{F}_{t_k}\big]=0,~~~~
\mathbb{E}\big[(\t B_k)^{2j}\big|\mathcal{F}_{t_k}\big]=\frac{(2j)!}{2^jj!}\t^j,~~  j= 1,2,\ldots,
\end{align}
and using   \eqref{log:eq:5.16}, we compute
\begin{align}\la{Y_4}
\mathbb{E}\big[\zeta_k \big|\mathcal{F}_{t_k}\big]
=&\frac{\mathrm{e}^{-Y_{k}}}{1+\mathrm{e}^{-Y_{k}}}\Big( (\mu+\gamma)e^{Y_k}\t+\big(K^2(\mu+\gamma)+|\sigma N|^2-\eta\big)\t+C \Delta^{2} +\mathbb{E}\big[U_k\big|\mathcal{F}_{t_k}\big]\Big)\nn\\
\leq& \frac{\mathrm{e}^{-Y_{k}}}{1+\mathrm{e}^{-Y_{k}}}\Big( (\mu+\gamma)e^{Y_k}\t+C\t \Big)\leq C\t,
\end{align}
and
\begin{align} \la{Y_5}
\mathbb{E}\big[\zeta_k^2 \big|\mathcal{F}_{t_k}\big]
=&\frac{\mathrm{e}^{-2Y_{k}}}{\big(1+\mathrm{e}^{-Y_{k}}\big)^2}
\mathbb{E}\Big[\Big( (\mu+\gamma)e^{Y_k}\t+\big(K^2(\mu+\gamma)-\eta\big)\t+C \Delta^{2}\nn\\
&~~~~~~~~~~~~~~~~~~~~+ |\sigma N|^2  (\Delta B_k)^2 -\sigma N \Delta B_k+U_k\Big)^2\Big|\mathcal{F}_{t_k}\Big]\nn\\
\leq& \frac{\mathrm{e}^{-2Y_{k}}}{\big(1+\mathrm{e}^{-Y_{k}}\big)^2}
\Big( 3(\mu+\gamma)^2e^{2Y_k}\t^2+C\t^2 +3|\sigma N|^2 \t+\mathbb{E}\big[U_k^2\big|\mathcal{F}_{t_k}\big]\Big)\nn\\
\leq& \frac{\mathrm{e}^{-2Y_{k}}}{\big(1+\mathrm{e}^{-Y_{k}}\big)^2}
\Big( 3(\mu+\gamma)^2e^{2Y_k}\t^2+C\t\Big)\leq C\t.
\end{align}
To estimate $\mathbb{E}\big[P_m(\zeta_k)\zeta_k^3 \big|\mathcal{F}_{t_k}\big]$, we begin with $\mathbb{E}\big[\zeta_k^3 \big|\mathcal{F}_{t_k}\big]$. Using \eqref{log:eq:5.16} and \eqref{properties} we obtain
 \begin{align}
\mathbb{E}\big[\zeta_k^3 \big|\mathcal{F}_{t_k}\big]
=&\frac{\mathrm{e}^{-3Y_{k}}}{\big(1+\mathrm{e}^{-Y_{k}}\big)^3}
\mathbb{E}\Big[\Big( (\mu+\gamma)e^{Y_k}\t+\big(K^2(\mu+\gamma)-\eta\big)\t+C \Delta^{2}\nn\\
&~~~~~~~~~~~~~~~~~~~~+ |\sigma N|^2  (\Delta B_k)^2 -\sigma N \Delta B_k+U_k\Big)^3\Big|\mathcal{F}_{t_k}\Big]\nn\\
\leq& \frac{\mathrm{e}^{-3Y_{k}}}{\big(1+\mathrm{e}^{-Y_{k}}\big)^3}
\Big(  (\mu+\gamma)^3e^{3Y_k}\t^3+C\t^{\frac{3}{2}}  +\mathbb{E}\big[U_k^3\big|\mathcal{F}_{t_k}\big]\Big)
\leq  C\t^{\frac{3}{2}}.
\end{align}
 On the other hand, we can use the same method to derive that
\begin{align}
\mathbb{E}\big[\zeta_k^3 \big|\mathcal{F}_{t_k}\big]
=&\frac{\mathrm{e}^{-3Y_{k}}}{\big(1+\mathrm{e}^{-Y_{k}}\big)^3}
\mathbb{E}\Big[\Big( (\mu+\gamma)e^{Y_k}\t+\big(K^2(\mu+\gamma)-\eta\big)\t+C \Delta^{2}\nn\\
&~~~~~~~~~~~~~~~~~~~~+ |\sigma N|^2  (\Delta B_k)^2 -\sigma N \Delta B_k+U_k\Big)^3\Big|\mathcal{F}_{t_k}\Big]\nn\\
\geq& -\frac{\mathrm{e}^{-3Y_{k}}}{\big(1+\mathrm{e}^{-Y_{k}}\big)^3}
\Big(  (\mu+\gamma)^3e^{3Y_k}\t^3+C\t^{\frac{3}{2}}  +\mathbb{E}\big[U_k^3\big|\mathcal{F}_{t_k}\big]\Big)
\geq   -C\t^{\frac{3}{2}}.
\end{align}
 Thus, both of the above inequality imply $\mathbb{E}\big[a_0\zeta_k^3\big|\mathcal{F}_{t_k}\big]\leq o(\t)$ for any constant $a_0$, where $a_i$ represents the coefficient of $\zeta_k^i$ term in polynomial $P_m(\zeta_k)$. We can also show that
\begin{align*}
\mathbb{E}\big[|a_i\zeta_k^{3+i}|\big|\mathcal{F}_{t_k}\big]\leq o(\t)
\end{align*}
for any $i\geq 1$. These implies
\begin{align}\la{Y_8}
\mathbb{E}\big[P_m(\zeta_k)\zeta_k^3 \big|\mathcal{F}_{t_k}\big]\leq o(\t).
\end{align}
Combining \eqref{logeq:5.46}, \eqref{Y_4}, \eqref{Y_5} and \eqref{Y_8},   we obtain that
\begin{align}
 \E \Big[\big(1+\mathrm{e}^{-\bar{Y}_{k+1}}\big)^{p}  \big|\mathcal{F}_{t_k}\Big]
\leq \big(1+\mathrm{e}^{-Y_{k}}\big)^{p}\big(1+C\t\big)
\end{align}
for any integer $0\leq k\leq \lfloor T/\Delta\rfloor$.  Obviously,
 \begin{align}
 \E \Big[\big(1+\mathrm{e}^{-\bar{Y}_{k+1}}\big)^{p}  \Big]
\leq \big(1+C\t\big)\E \Big[\big(1+\mathrm{e}^{-Y_{k}}\big)^{p}\Big].
\end{align}
Define
$\Omega_k=\big\{\omega\big|\bar{Y}_{k}\leq \log(K\t^{-\frac{1}{2}})\big\}$ and using the Chebyshev's inequality, we can  see that
\begin{align}
 \E \Big[\big(1+\mathrm{e}^{-Y_{k}}\big)^{p}  \Big]=& \E \Big[\big(1+\mathrm{e}^{-Y_{k}}\big)^{p}I_{\Omega_k}  \Big]+\E \Big[\big(1+\mathrm{e}^{-Y_{k}}\big)^{p}I_{\Omega^c_k}  \Big]\nn\\
=& \E \Big[\big(1+\mathrm{e}^{-\bar{Y}_{k}}\big)^{p}I_{\Omega_k}  \Big]+\E \Big[\big(1+K^{-1}\t^{\frac{1}{2}}\big)^{p}I_{\Omega^c_k}  \Big]\nn\\
\leq & \E \Big[\big(1+\mathrm{e}^{-\bar{Y}_{k}}\big)^{p}  \Big]+2^p\mathbb{P}\big\{\bar{Y}_{k}> \log(K\t^{-\frac{1}{2}})\big\}\nn\\
\leq &\big(1+C\t\big)\E \Big[\big(1+\mathrm{e}^{-Y_{k-1}}\big)^{p}\Big]
+2^p\frac{\mathbb{E}e^{2\bar{Y}_{k}}}{K^2\t^{-1}}.
\end{align}
 It follows from the result  of  Lemma \ref{log_le3.1} that
\begin{align}
 \E \Big[\big(1+\mathrm{e}^{-Y_{k}}\big)^{p}  \Big]
\leq &\big(1+C\t\big)\E \Big[\big(1+\mathrm{e}^{-Y_{k-1}}\big)^{p}\Big]
+C\t\nn\\
\leq &\big(1+C\t\big)^k \big(1+\mathrm{e}^{-Y_{0}}\big)^{p}
+C\t\sum_{i=0}^{k-1}\big(1+C\t\big)^i\nn\\
\leq &e^{Ck\t} \big(1+\mathrm{e}^{-Y_{0}}\big)^{p}
+Ck\t e^{Ck\t} \leq e^{CT}\Big[\big(1+\mathrm{e}^{-Y_{0}}\big)^{p}+CT\Big].
\end{align}
Therefore  the desired result follows.
\end{proof}

Define
$
\tau_{R}=\inf\{k\Delta\geq 0: y(t_k)\geq \log R\},~ \rho_{\Delta}=\inf\big\{k\Delta\geq 0: \bar{Y}_{\Delta}(t_k)\geq \log(K\Delta^{-\frac{1}{2}} )\big\}.
$
By Lemma \ref{le2.3}   we have
\begin{align*}
R^{p}\mathbb{P}\{\tau_{R}\leq T\}\leq  \mathbb{E}\Big[e^{py(T\wedge \tau_{R})}I_{\{\tau_{R}\leq T\}}\Big] +\mathbb{E}\Big[e^{py(T\wedge \tau_{R})}I_{\{\tau_{R}> T\}}\Big]=\mathbb{E}\big[e^{py(T\wedge \tau_{R})}\big]\leq C_T.
\end{align*}
It is easy to see that
\begin{align} \la{logeq3.5}
\mathbb{P}\{\tau_{R}\leq T\}\leq  C_T R^{-p},~~~~~\forall~p>0,
\end{align}
  where $C$  is  a positive constant independent of $R$.   Moreover, by the virtue of Lemma \ref{log_le3.1}, we have
\begin{align*}
 \E\Big[ \exp{\big(p \log(K\Delta^{-\frac{1}{2}} )  \big)}I_{\{\rho_{\Delta}  \leq  T \}}\Big]
 \leq& \E\Big[ \exp{\big(p\bar{Y}_{\Delta}( T \wedge \rho_{\Delta})  \big)}\Big] =\mathbb{E}\Big[\exp{\big(p\bar{Y}_{  \lfloor\frac{T\wedge  \rho_{\Delta}}{\t}\rfloor}\big)}\Big]\nn\\
 \leq&\sup_{  0\leq k\leq \lfloor T/\t \rfloor}\mathbb{E}\big[\exp{\big(p\bar{Y}_{k}\big)}\big]
\leq C_T,
\end{align*}
implies that
\begin{align} \la{logeq3.6}
 \mathbb{P} {\big\{\rho_{\t} \leq   T \big\}}\leq    C_T K^{-p}\Delta^{ p/2},~~~~\forall~p>0,
\end{align}
  where $C$  is  a positive constant independent of $\Delta$.

\begin{theorem}\la{logTh3.6}
For any $p>0$, the TEM scheme defined by \eqref{**} has the property that
\begin{align*}
\sup_{0\leq k\leq\lfloor {T}/{\Delta}\rfloor}\mathbb{E}\Big [  |Y_{k}-y(t_{k})|^{p}\Big]
\leq  C_T\Delta^p
\end{align*}
for any $\Delta\in (0, 1)$ and $T>0$.
\end{theorem}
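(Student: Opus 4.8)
To prove Theorem \ref{logTh3.6} the plan is to compare the truncated iteration $Y_k$ with the untruncated logarithmic EM iteration $Y^E_k$ of \eqref{yhf_EM}, whose $p$-th order strong error is already provided by Lemma \ref{SDE_log_th2}, and then to pay for the discrepancy on the rare event that the cut-off in \eqref{**} is activated before time $T$. The starting point is a coupling observation: recall $\rho_\Delta=\inf\{k\Delta\ge 0:\bar{Y}_\Delta(t_k)\ge\log(K\Delta^{-1/2})\}$; on $\{\rho_\Delta>T\}$ one has $\bar{Y}_k<\log(K\Delta^{-1/2})$ for every $k$ with $k\Delta\le T$, so the map $\cdot\wedge\log(K\Delta^{-1/2})$ is never effective and $Y_k=\bar{Y}_k$ there. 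An induction on $k$ then gives $Y_k=Y^E_k$ for all $0\le k\le\lfloor T/\Delta\rfloor$ on $\{\rho_\Delta>T\}$: the base case is $Y_0=y(0)=Y^E_0$, and if $Y_k=Y^E_k$ then $\bar{Y}_{k+1}=Y_k+F(Y_k)\Delta+\sigma N\Delta B_k=Y^E_k+F(Y^E_k)\Delta+\sigma N\Delta B_k=Y^E_{k+1}$, which on $\{\rho_\Delta>T\}$ is $<\log(K\Delta^{-1/2})$ whenever $(k+1)\Delta\le T$, whence $Y_{k+1}=\bar{Y}_{k+1}=Y^E_{k+1}$.

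Then I would split, for each $0\le k\le\lfloor T/\Delta\rfloor$,
\begin{align*}
\mathbb{E}\big[|Y_k-y(t_k)|^p\big]=\mathbb{E}\big[|Y_k-y(t_k)|^pI_{\{\rho_\Delta>T\}}\big]+\mathbb{E}\big[|Y_k-y(t_k)|^pI_{\{\rho_\Delta\le T\}}\big].
\end{align*}
By the coupling above the first term equals $\mathbb{E}\big[|Y^E_k-y(t_k)|^pI_{\{\rho_\Delta>T\}}\big]$, which is bounded by $\mathbb{E}\big[\sup_{0\le j\le\lfloor T/\Delta\rfloor}|Y^E_j-y(t_j)|^p\big]\le C_T\Delta^p$ thanks to Lemma \ref{SDE_log_th2}.

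For the second term I would use the Cauchy--Schwarz inequality,
\begin{align*}
\mathbb{E}\big[|Y_k-y(t_k)|^pI_{\{\rho_\Delta\le T\}}\big]\le\big(\mathbb{E}\big[|Y_k-y(t_k)|^{2p}\big]\big)^{1/2}\big(\mathbb{P}\{\rho_\Delta\le T\}\big)^{1/2},
\end{align*}
and bound the $2p$-th moment through the elementary inequality $|x|\le \mathrm{e}^{x}+\mathrm{e}^{-x}$, so that $\mathbb{E}\big[|Y_k-y(t_k)|^{2p}\big]\le C_p\big(\mathbb{E}[\mathrm{e}^{2pY_k}]+\mathbb{E}[\mathrm{e}^{-2pY_k}]+\mathbb{E}[\mathrm{e}^{2py(t_k)}]+\mathbb{E}[\mathrm{e}^{-2py(t_k)}]\big)\le C_T$, the first two expectations being controlled by Lemmas \ref{log_le3.1} and \ref{xinle1} and the last two by Lemma \ref{le2.3}. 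Plugging in the tail bound \eqref{logeq3.6}, namely $\mathbb{P}\{\rho_\Delta\le T\}\le C_TK^{-q}\Delta^{q/2}$ valid for every $q>0$, and choosing $q=4p$, yields $\mathbb{E}\big[|Y_k-y(t_k)|^pI_{\{\rho_\Delta\le T\}}\big]\le C_T\Delta^{p}$. Summing the two contributions and noting that all constants are independent of $k$ finishes the proof. (If one prefers not to lean on the sharp rate of Lemma \ref{SDE_log_th2}, one may instead argue on $\{\tau_R>T\}\cap\{\rho_\Delta>T\}$, where $y$ and $Y$ are a priori bounded, and dispose of the complementary events via \eqref{logeq3.5}--\eqref{logeq3.6} with $R=\Delta^{-1/2}$; this is presumably why $\tau_R$ was introduced.)

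The argument is essentially routine and I do not anticipate a genuine obstacle. The two places that need care are: (i) the inductive identification $Y_k=Y^E_k$ on $\{\rho_\Delta>T\}$, which relies on that event being phrased exactly through the quantities $\bar{Y}_k$ produced by the recursion \eqref{**}; and (ii) the choice of the exponent in the tail estimate, which must be taken as large as $q=4p$ so that $\Delta^{q/4}$ dominates $\Delta^p$. The latter is possible for every $p$ precisely because the truncation threshold was set at $\log(K\Delta^{-1/2})$, which makes the exit probability $\mathbb{P}\{\rho_\Delta\le T\}$ decay like an arbitrarily high positive power of $\Delta$.
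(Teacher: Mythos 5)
Your proposal is correct and follows essentially the same route as the paper: split on whether the truncation threshold is ever reached before time $T$, identify $Y_k$ with the untruncated iterate $Y^E_k$ on the good event so that Lemma \ref{SDE_log_th2} applies, and absorb the bad event using the exponential moment bounds of Lemmas \ref{le2.3}, \ref{log_le3.1}, \ref{xinle1} together with the tail estimate \eqref{logeq3.6}. The only cosmetic differences are that you use Cauchy--Schwarz where the paper uses Young's inequality, and you drop $\tau_R$ from the good event (the paper intersects with $\{\tau_{K\Delta^{-1/2}}>T\}$ as well), neither of which affects the validity of the argument.
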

\begin{proof}Define $\bar{\theta}_{ \t}=\tau_{K\Delta^{-1/2 }} \wedge \rho_{\t}$, $  \Omega_1:= \{\omega: \bar{\theta}_{\t} > T\}$, $\bar{u}_k=Y_k-y(t_{k}),$ for  any $k\Delta\in[0, T]$,
where  $\tau_L$ and $\rho_{\t}$  are defined by \eqref{logeq3.5}, and \eqref{logeq3.6},  respectively.
For any $q>p$, using the Young inequality we obtain that
\begin{align}\la{logeq3.7}
\E|\bar{ u}_{k}|^p =& \E\lf(|\bar{ u}_{k}|^p I_{\Omega_1}\rt)
+ \E\lf(|\bar{ u}_{k}|^p I_{\Omega_1^c}\rt) \nonumber \\
 \leq &\E\lf(|\bar{ u}_{k}|^p I_{\Omega_1}\rt)
+ \frac{ p\t^{p}}{q}\E\lf(|\bar{ u}_{k}|^{q}  \rt)+ \frac{q-p}{q\t^{p^2/(q-p)}} \PP(\Omega_1^c).
\end{align}
It follows from the results of  Lemmas \ref{le2.3}, \ref{log_le3.1} and \ref{xinle1}  that
\begin{align}\la{logeq3.8}
 \frac{p\t^p}{q}\E\lf(|\bar{ u}_{k}|^{q}  \rt) \leq  C_T\t^p.
\end{align}
 It follows from  \eqref{logeq3.5}, and \eqref{logeq3.6} that
\begin{align}\la{logeq3.9}
 \frac{q-p}{q\t^{p^2/(q-p)}} \PP(\Omega_1^c)  \leq &  \frac{q-p}{q\t^{p^2/(q-p)}}  \Big( \PP {\{\tau_{K\Delta^{-1/2 }} \leq T\}} +\PP {\{\rho_{  \t} \leq T\}}  \Big)\nonumber\\
 \leq & \frac{2(q-p)}{q\t^{p^2/(q-p)}}   \frac{C_T}{ K^{\delta}\Delta^{-\frac{\delta}{2}}}
\leq C_T
 \Delta^{\frac{\delta}{2}  -\frac{p^2}{q-p}}   \leq  C_T\t^p,
\end{align}
where $ \delta\geq  2pq/ (q-p)$.
Inserting \eqref{logeq3.8}, \eqref{logeq3.9}  and \eqref{*} into \eqref{logeq3.7} yields
\begin{align}\la{cond-13}
\E|\bar{ u}_{k}|^p
 \leq\E\lf(|\bar{ u}_{k}|^p I_{\Omega_1}\rt)
+  C_T\Delta^p
\leq&
\sup_{0\leq k\leq\lfloor \frac{T\wedge \rho_{\Delta}}{\Delta}\rfloor}\mathbb{E}\Big[  |Y_{k}-y(t_{k})|^{p}\Big]+  C_T\Delta^p\nn\\
=&
\sup_{0\leq k\leq\lfloor \frac{T\wedge \rho_{\Delta}}{\Delta}\rfloor}\mathbb{E}\Big[  |Y^E_{k}-y(t_{k})|^{p}\Big]+  C_T\Delta^p
\leq  C_T\Delta^p.
\end{align}
The proof is complete.  \end{proof}

\begin{theorem}\la{ }
For any $p>0$ there exists a constant $C>0$   such that
\begin{align*}
\sup_{k=0,\ldots, \lfloor T/\Delta\rfloor}\mathbb{E}\Big[ \big|I(t_k)-I^{\t}_k\big|^{p}\Big]
\leq& C_T\Delta^{p}
\end{align*}
for any $\Delta\in(0, 1]$ and $T>0$.
\end{theorem}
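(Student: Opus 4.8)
The plan is to obtain the statement directly from the convergence estimate in the $y$-coordinate (Theorem \ref{logTh3.6}), by observing that the back-transformation \eqref{2.4*}, \eqref{4.2*} is a globally Lipschitz map. Write
$$
g(x)=\frac{Ne^{x}}{1+e^{x}}=N-\frac{N}{1+e^{x}},\qquad x\in\RR,
$$
so that by \eqref{2.4*} and \eqref{4.2*} one has $I(t_k)=g(y(t_k))$ and $I^{\Delta}_{k}=g(Y_{k})$, and hence $|I(t_k)-I^{\Delta}_{k}|=|g(y(t_k))-g(Y_{k})|$. Thus everything reduces to controlling $g$.

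First I would record that $g$ has uniformly bounded derivative on $\RR$: indeed $g'(x)=\dfrac{Ne^{x}}{(1+e^{x})^{2}}$, and since $(1+e^{x})^{2}-4e^{x}=(1-e^{x})^{2}\geq 0$, we get $0<g'(x)\leq N/4$ for every $x\in\RR$. Consequently $g$ is globally Lipschitz with constant $N/4$, i.e. $|g(a)-g(b)|\leq \tfrac{N}{4}|a-b|$ for all $a,b\in\RR$. (Alternatively one could note $I(t_k),I^{\Delta}_k\in(0,N)$ and estimate the difference of the two fractions in \eqref{2.4*}--\eqref{4.2*} directly, but the Lipschitz bound is cleaner and needs no lower bound on denominators.)

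Taking $a=y(t_k)$ and $b=Y_{k}$ then gives $|I(t_k)-I^{\Delta}_{k}|^{p}\leq (N/4)^{p}\,|Y_{k}-y(t_k)|^{p}$ for every $p>0$. Taking expectations, passing to the supremum over $0\leq k\leq\lfloor T/\Delta\rfloor$, and invoking Theorem \ref{logTh3.6} yields
\begin{align*}
\sup_{0\leq k\leq\lfloor T/\Delta\rfloor}\E\big[|I(t_k)-I^{\Delta}_{k}|^{p}\big]
\leq \Big(\frac{N}{4}\Big)^{p}\,\sup_{0\leq k\leq\lfloor T/\Delta\rfloor}\E\big[|Y_{k}-y(t_k)|^{p}\big]
\leq C_T\Delta^{p},
\end{align*}
the constant absorbing $(N/4)^{p}$; this is the assertion.

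A remark on what is delicate: essentially all the substance sits upstream, in Theorem \ref{logTh3.6}, which in turn relies on the uniform moment and inverse-moment bounds for the truncated scheme (Lemmas \ref{log_le3.1} and \ref{xinle1}) together with the EM estimate \eqref{*}; the transfer step itself has no real obstacle beyond bookkeeping. The one point requiring care is to apply the convergence bound for the \emph{truncated} iterate $Y_{k}$ — which is exactly the object fed into the back-transform \eqref{4.2*} defining $I^{\Delta}_{k}$ — rather than for $\bar{Y}_{k}$ or for the plain EM iterate $Y^{E}_{k}$. Since Theorem \ref{logTh3.6} is stated precisely for $Y_{k}$, no additional argument is needed.
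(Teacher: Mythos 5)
Your proof is correct and follows essentially the same route as the paper: both reduce the claim to Theorem \ref{logTh3.6} via the global Lipschitz continuity of the back-transformation $x\mapsto Ne^{x}/(1+e^{x})$. Your explicit Lipschitz constant $N/4$ is a slight sharpening of the paper's bound, but the argument is otherwise identical.
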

\begin{proof}   By the Lamperti-transformation \eqref{2.4*} and \eqref{4.2*}, we have
\begin{align*}
 \sup_{k=0,\ldots, \lfloor T/\Delta\rfloor}\mathbb{E}\bigg[ \Big|I(t_k)-I^{\t}_k\Big|^{p}\bigg]
=&N\sup_{k=0,\ldots, \lfloor T/\Delta\rfloor}\mathbb{E}\bigg[ \Big|\frac{\mathrm{e}^{y(t_k)}}{1+\mathrm{e}^{y(t_k)}}
-\frac{\mathrm{e}^{Y_k}}{1+\mathrm{e}^{Y_k}}\Big|^{p}\bigg]\nn\\
\leq& N\sup_{k=0,\ldots, \lfloor T/\Delta\rfloor}\mathbb{E}\bigg[ \big|y(t_k)-Y_k\big|^{p}\bigg].
\end{align*}
Thus, by applying Theorem \ref{logTh3.6}, we infer that
\begin{align}\la{}
\mathbb{E}\bigg[ \sup_{k=0,\ldots, \lfloor T/\Delta\rfloor}\Big|I(t_k)-I^{\t}_k\Big|^{p}\bigg]
\leq  N\sup_{k=0,\ldots, \lfloor T/\Delta\rfloor} \mathbb{E}\bigg[ \big|y(t_k)-Y_k\big|^{p}\bigg]\leq C_T\Delta^{p}
\end{align}
for any $\Delta\in(0, 1]$. The proof is complete.
\end{proof}

\section{Extinction}\la{section5}
In the study of population systems, extinction is one  of the most important issues, the easily
implementable scheme preserving the underlying extinction is desired eagerly. In this
section we cite the extinction criterion of the exact solution for SIS epidemic models \eqref{2.2*} (see, e.g.,  \cite{GGHMP2011})
and go a further step to show the logarithmic TEM method keeping extinction well.
To begin this section, we cite an extinction of stochastic SIS models,  please see details in \cite{GGHMP2011}.

\begin{theorem}[{\cite[Theorem 4.1]{GGHMP2011}}]\la{T:5.1}
 If
\begin{align}\la{mao4.1}
R_0^S:=R^D_0-\frac{\sigma^2N^2}{2(\mu+\gamma)}=\frac{\beta N}{\mu+\gamma}-\frac{\sigma^2N^2}{2(\mu+\gamma)}<1~~~~and ~~~~\sigma^2\leq \frac{\beta}{N},
\end{align}
then for any given initial value $I_0\in(0,N)$, the solution of the SDE \eqref{2.2*} obeys
\begin{align}
\limsup_{t\rightarrow \infty}\frac{1}{t}\log(I(t))\leq \beta N-\mu-\gamma-0.5\sigma^2N^2<0~~~~a.s.;
\end{align}
namely, $I(t)$ tends to zero exponentially a.s. In other words, the disease dies out with probability one.
\end{theorem}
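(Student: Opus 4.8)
The statement is the classical extinction criterion of Gray et al.\ \cite{GGHMP2011}; the route I would take is the standard top-Lyapunov-exponent argument applied to $\log I(t)$. First I would apply the It\^o formula to the scalar SDE \eqref{2.2*}. Writing $\eta=\beta N-\mu-\gamma$ and cancelling the $I(t)^2$ in the diffusion coefficient against the $I(t)^{-2}$ coming from the second-order term, one obtains
\begin{align*}
\mathrm{d}\log I(t)=\Big[\eta-\beta I(t)-\tfrac{\sigma^2}{2}\big(N-I(t)\big)^2\Big]\mathrm{d}t+\sigma\big(N-I(t)\big)\,\mathrm{d}B(t).
\end{align*}

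The second step is to bound the drift function $g(x):=\eta-\beta x-\tfrac{\sigma^2}{2}(N-x)^2$ uniformly on $(0,N)$. Since $g'(x)=-\beta+\sigma^2(N-x)$ is decreasing in $x$ and $g'(0)=\sigma^2 N-\beta\le 0$ precisely because of the hypothesis $\sigma^2\le\beta/N$, the function $g$ is non-increasing on $[0,N]$, so $g(x)\le g(0)=\eta-\tfrac{\sigma^2 N^2}{2}$ there. By the cited existence theorem the solution stays in $(0,N)$ for all $t\ge 0$ almost surely, so this pointwise bound holds along the whole trajectory; integrating the displayed SDE then gives
\begin{align*}
\log I(t)\le\log I_0+\Big(\eta-\tfrac{\sigma^2 N^2}{2}\Big)t+M(t),\qquad M(t):=\int_0^t\sigma\big(N-I(s)\big)\,\mathrm{d}B(s).
\end{align*}

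Third, $M$ is a continuous martingale with $\langle M\rangle_t=\int_0^t\sigma^2(N-I(s))^2\,\mathrm{d}s\le\sigma^2N^2 t$, so the strong law of large numbers for martingales yields $M(t)/t\to 0$ a.s.\ as $t\to\infty$. Dividing the previous inequality by $t$ and passing to the limit gives $\limsup_{t\to\infty}t^{-1}\log I(t)\le\eta-\tfrac{\sigma^2N^2}{2}=\beta N-\mu-\gamma-\tfrac12\sigma^2N^2$ a.s., and the right-hand side is strictly negative exactly when $\tfrac{\beta N}{\mu+\gamma}-\tfrac{\sigma^2N^2}{2(\mu+\gamma)}<1$, i.e.\ when $R_0^S<1$, which is the first condition in \eqref{mao4.1}.

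I expect the only genuinely delicate point to be the interaction of the two hypotheses in \eqref{mao4.1}: the condition $\sigma^2\le\beta/N$ is exactly what forces the maximum of the quadratic drift $g$ onto the boundary point $x=0$; without it $g$ would peak at the interior point $x^\ast=N-\beta/\sigma^2$, producing the larger exponent $\eta-\beta N+\beta^2/(2\sigma^2)$ and breaking the clean bound. The remaining care is in justifying $M(t)/t\to0$, which relies on the a.s.\ boundedness $0<I(t)<N$ supplied by the existence result; everything else is a routine It\^o computation.
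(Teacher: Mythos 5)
Your proof is correct and is precisely the standard argument from \cite{GGHMP2011} for this cited result: It\^o's formula applied to $\log I(t)$, the observation that $\sigma^2\le\beta/N$ pins the maximum of the quadratic drift $g$ at $x=0$, and the strong law of large numbers for the bounded-quadratic-variation martingale. The paper itself only quotes the theorem without proof, but its discrete-time analogue (Theorem \ref{yhfTh4.2}) follows exactly the same scheme with the same function $g$, so your route is the intended one.
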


\begin{theorem}\la{yhfTh4.2}
Under the condition  of Theorem \ref{T:5.1},
there is a constant $\t \in (0,  \Delta^{*})$ such that the scheme
 \eqref{**} has the property that
\begin{align}\la{mao4.2*}
\limsup_{k\t\rightarrow \infty}\frac{1}{k\t}\log(I^{\t}_k)\leq  \beta N-\mu-\gamma-0.5\sigma^2N^2+h(\t)<0~~~~a.s.;
\end{align}
namely, $I^{\t}_k$ tends to zero exponentially a.s. In other words, the disease dies out with probability one, where
\begin{align}\la{h_4.4}
h(\t):= 6|\eta+0.5 \sigma^2N^2|^3\t^{2}+6K^3(\mu+\gamma)^3 +4|\sigma N|^3\t^{\frac{1}{2}},
\end{align}
and $\t^*$ is the positive solution of the equation (in $\t$)
$$
h(\t)=0.5\sigma^2N^2+\mu+\gamma-\beta N.
$$
\end{theorem}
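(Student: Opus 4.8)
The strategy is to reproduce, at the level of the scheme, the argument behind Theorem \ref{T:5.1}: I will derive a one-step inequality for $\log(I^{\Delta}_{k})$ whose leading ($\Delta$-order) drift is exactly the continuous-time drift $\beta N-\mu-\gamma-\beta I-\tfrac12\sigma^{2}(N-I)^{2}$, bound that drift above by $\lambda:=\beta N-\mu-\gamma-\tfrac12\sigma^{2}N^{2}$ — which is $<0$ thanks to $R_0^S<1$ — using the second condition $\sigma^{2}\le\beta/N$ in \eqref{mao4.1}, push the discretization remainder into a deterministic quantity of the size of $h(\Delta)$ plus a martingale, and close with a martingale strong law.

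Write $\log(I^{\Delta}_k)=\phi(Y_k)$ with $\phi(y)=\log N-\log(1+\mathrm e^{-y})$, which is strictly increasing and concave, with $\phi'(y)=1/(1+\mathrm e^{y})$, $\phi''(y)=-\mathrm e^{y}/(1+\mathrm e^{y})^{2}$ and $\sup_{y\in\RR}|\phi'''(y)|<\infty$. Since $Y_{k+1}\le\bar Y_{k+1}$ and $\phi$ is increasing, $\log(I^{\Delta}_{k+1})\le\phi(\bar Y_{k+1})$, so I would Taylor-expand $\phi(\bar Y_{k+1})$ about $Y_k$ to third order, using $\bar Y_{k+1}-Y_k=F(Y_k)\Delta+\sigma N\Delta B_k$. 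Setting $I=I^{\Delta}_k$ and using $1/(1+\mathrm e^{Y_k})=(N-I)/N$, $\mathrm e^{Y_k}/(1+\mathrm e^{Y_k})^{2}=I(N-I)/N^{2}$, the first-order term is $\phi'(Y_k)F(Y_k)\Delta$ plus the martingale increment $\tfrac{N-I}{N}\sigma N\Delta B_k$, while in the second-order term I would replace $(\Delta B_k)^{2}$ by $\Delta+[(\Delta B_k)^{2}-\Delta]$, so that it contributes $-\tfrac{\sigma^{2}I(N-I)}{2}\Delta$ together with error and martingale pieces. A short computation using $\eta+\mu+\gamma=\beta N$ gives the identity
\begin{align*}
\phi'(Y_k)F(Y_k)-\tfrac{\sigma^{2}I(N-I)}{2}=\beta N-\mu-\gamma-\beta I-\tfrac{\sigma^{2}}{2}(N-I)^{2},
\end{align*}
i.e.\ exactly the drift of $\log I(t)$ in the proof of Theorem \ref{T:5.1}; and since $\tfrac{\sigma^{2}}{2}\big(N^{2}-(N-I)^{2}\big)=\tfrac{\sigma^{2}}{2}I(2N-I)\le\sigma^{2}NI\le\beta I$ — precisely because $\sigma^{2}\le\beta/N$ — this drift is $\le\lambda<0$ for every $I\in(0,N)$.

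It remains to estimate the remainder. Since $Y_k\le\log(K\Delta^{-1/2})$ by the truncation map, $\mathrm e^{Y_k}\le K\Delta^{-1/2}$, hence $|F(Y_k)|\le C+CK\Delta^{-1/2}$ and, by \eqref{eqyhf4.5}, $|\bar Y_{k+1}-Y_k|^{3}\le C\Delta^{3}+CK^{3}\Delta^{3/2}+C|\Delta B_k|^{3}$; together with $0<I^{\Delta}_k(N-I^{\Delta}_k)\le N^{2}/4$, each remaining term (the $(F(Y_k)\Delta)^{2}$ piece of the second-order term, the cross term $F(Y_k)\Delta\,\Delta B_k$, and the $\phi'''$-remainder) is dominated by a term of the size appearing in $h(\Delta)$ plus a conditionally mean-zero piece, once $(\Delta B_k)^{2}$ and $|\Delta B_k|^{3}$ are split into their conditional means (controlled by Lemma \ref{leB} and \eqref{properties}) and their fluctuations. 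Summing the one-step inequality over $j=0,\dots,k-1$ yields
\begin{align*}
\log(I^{\Delta}_k)\le\log(I^{\Delta}_0)+\big(\lambda+h(\Delta)\big)k\Delta+M_k,
\end{align*}
where $M_k=\sum_{j=0}^{k-1}\xi_j$ is an $\{\mathcal F_{t_k}\}$-martingale with $\E[\xi_j^{2}]\le C\Delta$ uniformly in $j$. Dividing by $k\Delta$, the term $\log(I^{\Delta}_0)/(k\Delta)\to0$, and $\sum_j\E[\xi_j^{2}]/j^{2}<\infty$ gives $M_k/(k\Delta)\to0$ a.s.\ by the martingale strong law, so $\limsup_{k\Delta\to\infty}\tfrac1{k\Delta}\log(I^{\Delta}_k)\le\lambda+h(\Delta)$ a.s. Since $h$ is increasing, $h(\Delta)<h(\Delta^{*})=-\lambda$ for $\Delta\in(0,\Delta^{*})$ (which presupposes that the data and $K$ are such that the root $\Delta^{*}$ of $h(\Delta)=-\lambda$ is positive), whence $\lambda+h(\Delta)<0$ and \eqref{mao4.2*} follows.

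The crux is the remainder bookkeeping. One must verify that after expanding $(\bar Y_{k+1}-Y_k)^{2}$ and extracting $\Delta+[(\Delta B_k)^{2}-\Delta]$ from $(\Delta B_k)^{2}$, the sole $\Delta$-order deterministic contribution is the continuous drift — in particular that the $\sigma^{2}N^{2}/(1+\mathrm e^{x})$ part of $F$ recombines with the $\tfrac12\phi''$-term to yield precisely $-\tfrac12\sigma^{2}(N-I)^{2}$ — and that the remaining terms split, via Lemma \ref{leB}, \eqref{properties} and $\mathrm e^{Y_k}\le K\Delta^{-1/2}$, into deterministic pieces of the size recorded in $h(\Delta)$ plus conditionally mean-zero increments whose second moments are summable after normalization by $j^{-2}$. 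Note also that the negative term $\tfrac12\phi''(Y_k)(F(Y_k)\Delta)^{2}\le0$ can be dropped when seeking an upper bound. The condition $\sigma^{2}\le\beta/N$ is used exactly once, to pass from the continuous drift to $\lambda$, and $R_0^S<1$ is what makes $\lambda<0$.
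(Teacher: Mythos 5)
Your proposal is correct and follows essentially the same route as the paper: a third-order Taylor expansion of $V(x)=\log\bigl(Ne^{x}/(1+e^{x})\bigr)$ at $Y_k$ (using $Y_{k+1}\le\bar Y_{k+1}$ and monotonicity), extraction of the continuous-time drift $g(I^{\Delta}_k)=\beta N-\mu-\gamma-\beta I^{\Delta}_k-\tfrac12\sigma^{2}(N-I^{\Delta}_k)^{2}$, the bound $g\le\beta N-\mu-\gamma-\tfrac12\sigma^{2}N^{2}$ via $\sigma^{2}\le\beta/N$, absorption of the remainder into $h(\Delta)\Delta$ plus conditionally centered increments, and the martingale strong law of large numbers. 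Your explicit check that the drift bound is exactly where $\sigma^{2}\le\beta/N$ enters, and your parenthetical caveat that $h$ as written in \eqref{h_4.4} satisfies $h(0)=6K^{3}(\mu+\gamma)^{3}>0$ (so the root $\Delta^{*}$ exists only under an additional smallness condition, suggesting a missing factor $\Delta^{1/2}$ on that term), are both points the paper passes over silently.
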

\begin{proof} Clear, we can see that $Y_k\leq \log(K\t^{-\frac{1}{2}})$ and  $Y_{k}\leq\bar{Y}_{k}.$  Direct calculation leads to
$$
I_{k}^{\t}-\bar{I}_k^{\t}=\frac{N}{1+e^{\bar{Y}_k}}-\frac{N}{1+e^{Y_k}}
=\frac{N(e^{Y_k}-e^{\bar{Y}_k})}{(1+e^{\bar{Y}_k})(1+e^{Y_k})}\leq 0,
$$
implies that $I_k\leq \bar{I}_k$. Here and below we always denote
$$f(x):=N-\frac{N}{1+e^{x}}=\frac{N e^x}{1+e^x},~~ \forall~x\in \mathbb{R},$$
we derive that
\begin{align}
\left\{\begin{aligned}
 &f'(x)=\frac{N e^x}{1+e^x}\frac{1}{1+e^x}=\frac{f(x)}{1+e^x},~~~f''(x)=\frac{N e^x(1-e^x)}{(1+e^x)^3}=\frac{f(x)(1-e^x)}{(1+e^x)^2},\\
 &f'''(x)=\frac{N e^x}{(1+e^x)^4}\big[(1-e^x)^2-2e^x\big]
=\frac{f(x)}{(1+e^x)^3}\big[(1-e^x)^2-2e^x\big].
\end{aligned}\right.
\end{align}
To proceed, we define
$$V(x):=\log f(x)=\log \frac{N e^x}{1+e^x},~~ \forall~x\in \mathbb{R}.$$
 It is easy to see that
\begin{align}
\left\{\begin{aligned}
&V'(x)=\frac{f'(x)}{f(x)}=\frac{1}{1+e^x},\\
 &V''(x)=\big[\frac{f'(x)}{f(x)}\big]'
=\frac{f''(x)}{f(x)}-\Big(\frac{f'(x)}{f(x)}\Big)^2
= \frac{1-e^x}{(1+e^x)^2}-\frac{1}{(1+e^x)^2}=\frac{-e^x}{(1+e^x)^2},\\
&V'''(x)=\frac{f'''(x)}{f(x)}-3\frac{f'(x)f''(x)}{f^2(x)}
+2\Big[\frac{f'(x)}{f(x)}\Big]^3=\frac{(1-e^x)^2}{(1+e^x)^3}
-\frac{1}{(1+e^x)^2},
\end{aligned}\right.
\end{align}
and for any $x\in \mathbb{R}$,
\begin{align}\la{eqyhf5.6}
|V'''(x)|\leq \frac{1}{(1+e^x)}+\frac{1}{(1+e^x)^2}\leq 2.
\end{align}
Then, by the virtue of the Taylor formula with Lagrange remainder term, we have
\begin{align}
V(\bar{Y}_{k+1})=&V(Y_k)+V'(Y_k)(\bar{Y}_{k+1}-Y_k)
+\frac{V''(Y_k)}{2}(\bar{Y}_{k+1}-Y_k)^2\nn\\
&+\frac{ V'''(Y_k+\theta(\bar{Y}_{k+1}-Y_k))}{3!}(\bar{Y}_{k+1}-Y_k)^3\nn\\
=&V(Y_k)+\frac{\bar{Y}_{k+1}-Y_k}{1+e^{Y_k}}
+ \frac{-e^{Y_k}(\bar{Y}_{k+1}-Y_k)^2}{2(1+e^{Y_k})^2}
+\frac{ V'''(Y_k+\theta(\bar{Y}_{k+1}-Y_k))}{3!}(\bar{Y}_{k+1}-Y_k)^3,
\end{align}
where $\theta\in (0,1)$. By \eqref{eqyhf5.6}, we can see that
\begin{align}\la{eqyhf5.8}
V(\bar{Y}_{k+1})
\leq &V(Y_k)+(1+e^{Y_k})^{-1}(\bar{Y}_{k+1}-Y_k)
+ \frac{-e^{Y_k}}{2(1+e^{Y_k})^2}(\bar{Y}_{k+1}-Y_k)^2
+\frac{1}{3}|\bar{Y}_{k+1}-Y_k|^3.
\end{align}
It follows from Lemma \ref{leB} that
$
\mathbb{E}|\t B_k|^3=(2\t)^{\frac{3}{2}}/\sqrt{\pi},
$
and by \eqref{eqyhf4.5}, one observes
\begin{align*}
\frac{1}{3}|\bar{Y}_{k+1}-Y_k|^3
\leq&   h(\t)\t +M_k^{(1)},
\end{align*}
where
$$
M_k^{(1)}=2|\sigma N|^3\Big(|\t B_k|^3-\frac{(2\t)^{\frac{3}{2}}}{\sqrt{\pi}}\Big).
$$
Inserting the above inequality into \eqref{eqyhf5.8} we have
\begin{align}\la{eqyhf5.9}
V(\bar{Y}_{k+1})
\leq &V(Y_k)+(1+e^{Y_k})^{-1}(\bar{Y}_{k+1}-Y_k)
+ \frac{-e^{Y_k}}{2(1+e^{Y_k})^2}(\bar{Y}_{k+1}-Y_k)^2
 +h(\t)\t +M_k^{(1)}.
\end{align}
The second and third terms at the right end of the estimation inequality, we have
\begin{align}\la{eqyhf5.10}
 (1+e^{Y_k})^{-1}(\bar{Y}_{k+1}-Y_k)
 =&(1+e^{Y_k})^{-1}\Big[\big(\eta-(\mu+\gamma)e^{Y_k}+\frac{\sigma^2N^2}{2}
-\frac{\sigma^2N^2}{1+e^{Y_k}}\big)\t+\sigma N\t B_k\Big]\nn\\
=&(1+e^{Y_k})^{-1}\big(\eta-(\mu+\gamma)e^{Y_k}\big)\t+(1+e^{Y_k})^{-1}\sigma N\t B_k\nn\\
&+(1+e^{Y_k})^{-1}\Big(\frac{\sigma^2N^2}{2}
-\frac{\sigma^2N^2}{1+e^{Y_k}}\Big)\t,
\end{align}
and
\begin{align*}
(\bar{Y}_{k+1}-Y_k)^2=&\Big[\big(\eta-(\mu+\gamma)e^{Y_k}+\frac{\sigma^2N^2}{2}
-\frac{\sigma^2N^2}{1+e^{Y_k}}\big)\t+\sigma N\t B_k\Big]^2\nn\\
=&\bigg(\eta-(\mu+\gamma)e^{Y_k}+\frac{\sigma^2N^2}{2}
-\frac{\sigma^2N^2}{1+e^{Y_k}}\bigg)^2\t^2+\sigma^2 N^2\big(|\t B_k|^2-\t\big)+\sigma^2 N^2\t\nn\\
&+2\sigma  N \t\bigg(\eta-(\mu+\gamma)e^{Y_k}+\frac{\sigma^2N^2}{2}
-\frac{\sigma^2N^2}{1+e^{Y_k}}\bigg)\t B_k\nn\\
=&\sigma^2 N^2\t+\bigg(\eta-(\mu+\gamma)e^{Y_k}+\frac{\sigma^2N^2}{2}
-\frac{\sigma^2N^2}{1+e^{Y_k}}\bigg)^2\t^2+M_k^{(2)},
\end{align*}
where
$$
M_k^{(2)}:= \sigma^2 N^2\big(|\t B_k|^2-\t\big) +2\sigma N \t\bigg(\eta-(\mu+\gamma)e^{Y_k}+\frac{\sigma^2N^2}{2}
-\frac{\sigma^2N^2}{1+e^{Y_k}}\bigg)\t B_k.
$$
 Then
\begin{align}\la{eqyhf5.11}
\frac{-e^{Y_k}}{2(1+e^{Y_k})^2}(\bar{Y}_{k+1}-Y_k)^2=&
\frac{-e^{Y_k}}{2(1+e^{Y_k})^2}\Big[\big(\eta-(\mu+\gamma)e^{Y_k}
+\frac{\sigma^2N^2}{2}
-\frac{\sigma^2N^2}{1+e^{Y_k}}\big)\t+\sigma N\t B_k\Big]^2\nn\\
=&-\frac{\sigma^2 N^2e^{Y_k}}{2(1+e^{Y_k})^2}\t-\frac{e^{Y_k}}{2(1+e^{Y_k})^2}M_k^{(2)}\nn\\
&-\frac{e^{Y_k}}{2(1+e^{Y_k})^2}\bigg(\eta-(\mu+\gamma)e^{Y_k}+\frac{\sigma^2N^2}{2}
-\frac{\sigma^2N^2}{1+e^{Y_k}}\bigg)^2\t^2
\end{align}
Substituting \eqref{eqyhf5.10} and \eqref{eqyhf5.11} into \eqref{eqyhf5.9} yields
\begin{align}\la{eqyhf5.12}
V(\bar{Y}_{k+1})
\leq &V(Y_k)+(1+e^{Y_k})^{-1}\big(\eta-(\mu+\gamma)e^{Y_k}\big)\t+(1+e^{Y_k})^{-1}\sigma N\t B_k\nn\\
&+(1+e^{Y_k})^{-1}\Big(\frac{\sigma^2N^2}{2}
-\frac{\sigma^2N^2}{1+e^{Y_k}}\Big)\t -\frac{\sigma^2 N^2e^{Y_k}}{2(1+e^{Y_k})^2}\t-\frac{e^{Y_k}}{2(1+e^{Y_k})^2}M_k^{(2)}\nn\\
&-\frac{e^{Y_k}}{2(1+e^{Y_k})^2}\bigg(\eta-(\mu+\gamma)e^{Y_k}+\frac{\sigma^2N^2}{2}
-\frac{\sigma^2N^2}{1+e^{Y_k}}\bigg)^2\t^2 +h(\t)\t +M_k^{(1)}\nn\\
\leq &V(Y_k)+(1+e^{Y_k})^{-1}\big(\eta-(\mu+\gamma)e^{Y_k}\big)\t +(1+e^{Y_k})^{-1}\Big(\frac{\sigma^2N^2}{2}
-\frac{\sigma^2N^2}{1+e^{Y_k}}\Big)\t\nn\\
& -\frac{\sigma^2 N^2e^{Y_k}}{2(1+e^{Y_k})^2}\t  +h(\t)\t +M_k,
\end{align}
where
$$
M_k=(1+e^{Y_k})^{-1}\sigma N\t B_k-\frac{e^{Y_k}}{2(1+e^{Y_k})^2}M_k^{(2)}+M_k^{(1)}.
$$
We can see that
\begin{align*}
(1+e^{Y_k})^{-1}\Big(\frac{\sigma^2N^2}{2}
-\frac{\sigma^2N^2}{1+e^{Y_k}}\Big)-\frac{ \sigma^2N^2e^{Y_k}}{2(1+e^{Y_k})^2}
= \frac{\sigma^2N^2}{2(1+e^{Y_k})^2}\Big(1+e^{Y_k}-2-e^{Y_k}\Big)
=&-\frac{\sigma^2N^2}{2(1+e^{Y_k})^2},
\end{align*}
and
\begin{align*}
 (1+e^{Y_k})^{-1} \big(\eta-(\mu+\gamma)e^{Y_k}\big)
 =&  (1+e^{Y_k})^{-1} \big(\beta N-\mu-\gamma-(\mu+\gamma)e^{Y_k}\big)\nn\\
 =&  (1+e^{Y_k})^{-1} \big(\beta N-(\mu+\gamma)(1+e^{Y_k})\big)
 =   \frac{\beta N}{1+e^{Y_k}} -\mu-\gamma.
\end{align*}
Making use of the above inequality and \eqref{eqyhf5.12} yields
\begin{align}\la{eqyhf5.13}
V(\bar{Y}_{k+1})
\leq &V(Y_k)+\Big(\frac{\beta N}{1+e^{Y_k}} -\mu-\gamma-\frac{\sigma^2 N^2}{2(1+e^{Y_k})^2}\Big)\t +h(\t)\t +M_k.
\end{align}
Due to \eqref{4.2*} and definition of $V$, we have
$$V(\bar{Y}_{k+1})=\log(\bar{I}^{\t}_{k+1}),~V(Y_{k})=\log(I^{\t}_{k}),
~N-I^{\t}_k=\frac{N}{1+e^{Y_k}},$$
and hence $V(Y_{k+1})\leq V(\bar{Y}_{k+1}),$
\begin{align*}
  \frac{\beta N}{1+e^{Y_k}} -\mu-\gamma= \beta N- \beta I^{\t}_k-\mu-\gamma,~ -\frac{\sigma^2N^2}{2(1+e^{Y_k})^2}=-0.5\sigma^2(N-I^{\t}_k)^2.
\end{align*}
Then
\begin{align}\la{mao4.3}
 \log(I^{\t}_{k+1})
\leq &\log(I_k^{\t})+ \Big[\beta N-\mu-\gamma- \beta I_k^{\t}-0.5\sigma^2(N-I^{\t}_k)^2\Big]\t +h(\t)\t +M_k\nn\\
= &\log(I_k^{\t})+ g(I^{\t}_k)\t
 +h(\t)\t+M_k,
\end{align}
for any integer $k\geq 0$, where $g:\mathbb{R}\rightarrow \mathbb{R}$ is defined by
\begin{align}\la{mao4.4}
 g(x)=\beta N-\mu-\gamma- \beta x-0.5\sigma^2(N-x)^2.
\end{align}
However, under condition \eqref{mao4.1}, we have
$$
g(I_k^{\t})\leq \beta N-\mu-\gamma-0.5\sigma^2N^2
$$
for $I_k^{\t}\in (0,N)$. It then follows from \eqref{mao4.3} that
\begin{align*}
 \log(I^{\t}_{k+1})
\leq  &\log(I_k^{\t})+  \big(\beta N-\mu-\gamma-0.5\sigma^2N^2\big)\t
 +h(\t)\t+M_k.
\end{align*}
Repeating this procedure arrives at
\begin{align*}
 \log(I^{\t}_{k})
\leq  &\log(I_0)+  \big(\beta N-\mu-\gamma-0.5\sigma^2N^2\big)k\t
 +h(\t)k\t+\sum_{i=0}^{k-1}M_i.
\end{align*}
This implies
\begin{align}\la{mao4.6}
 \limsup_{k\t\rightarrow +\infty}\frac{1}{k\t}\log(I^{\t}_{k})
\leq   \beta N-\mu-\gamma-0.5\sigma^2N^2
 +h(\t) +\limsup_{k\t\rightarrow +\infty}\frac{1}{k\t}\sum_{i=0}^{k-1}M_i~~\mathrm{a.s.}
\end{align}
However, by the large number theorem for martingales (see, e.g., \cite{Mao08}), we have
$$
\limsup_{k\t\rightarrow +\infty}\frac{1}{k\t}\sum_{i=0}^{k-1}M_i=0~~\mathrm{a.s.}
$$
We therefore obtain the desired assertion \eqref{mao4.2*} from \eqref{mao4.6}.
\end{proof}

In Theorem \ref{T:5.1} we require the noise intensity $\sigma^2 \leq \beta/N$. The following theorem
covers the case when $\sigma^2 > \beta/N$.

\begin{theorem}[{\cite[Theorem 4.3]{GGHMP2011}}]\la{Th5.3}
If
\begin{align}\la{eqmao4.18}
\sigma^2>\frac{\beta}{N}\vee \frac{\beta^2}{2(\mu+\gamma)},
\end{align}
then for any given initial value $I_0\in (0,N)$, the solution of the SDE SIS model \eqref{2.2*} obeys
\begin{align}
\limsup_{t\rightarrow\infty}\frac{1}{t}\log(I(t))\leq -\mu-\gamma+\frac{\beta^2}{2 \sigma^2}<0~~~a.s.;
\end{align}
namely, $I(t)$ tends to zero exponentially a.s. In other words, the disease dies out with probability one.
\end{theorem}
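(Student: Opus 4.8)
The plan is to linearise the dynamics via a logarithmic Lyapunov function, reduce the stochastic problem to a one–dimensional deterministic optimisation, and then eliminate the residual noise with the strong law of large numbers for martingales. Since $I(t)\in(0,N)$ for all $t\ge 0$ almost surely (the existence–and–boundedness theorem recalled in Section~\ref{s-w}), $\log I(t)$ is well defined, and It\^o's formula applied to \eqref{2.2*} gives
\begin{align*}
\log I(t)=\log I_0+\int_0^t\Big[\eta-\beta I(s)-\tfrac12\sigma^2\big(N-I(s)\big)^2\Big]\mathrm{d}s+\int_0^t\sigma\big(N-I(s)\big)\mathrm{d}B(s),
\end{align*}
where $\eta=\beta N-\mu-\gamma$. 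The drift integrand is exactly $g(I(s))$ with $g$ as in \eqref{mao4.4}, so everything hinges on bounding $\sup_{x\in(0,N)}g(x)$.

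Next I would carry out that optimisation. The substitution $u=N-x\in(0,N)$ turns $g$ into the concave parabola $-\tfrac12\sigma^2u^2+\beta u-(\mu+\gamma)$, whose unconstrained maximiser is $u^{*}=\beta/\sigma^2$. The first half of the hypothesis \eqref{eqmao4.18}, namely $\sigma^2>\beta/N$, is precisely what forces $u^{*}\in(0,N)$, so the supremum is attained in the interior and equals $\frac{\beta^2}{2\sigma^2}-\mu-\gamma$. Consequently $g(I(s))\le\frac{\beta^2}{2\sigma^2}-\mu-\gamma$ for every $s\ge0$, and hence
\begin{align*}
\log I(t)\le\log I_0+\Big(\frac{\beta^2}{2\sigma^2}-\mu-\gamma\Big)t+M(t),\qquad M(t):=\int_0^t\sigma\big(N-I(s)\big)\mathrm{d}B(s).
\end{align*}

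Then I would control the martingale term. Since $0<N-I(s)<N$, $M$ is a continuous martingale with $\langle M\rangle_t=\int_0^t\sigma^2\big(N-I(s)\big)^2\mathrm{d}s\le\sigma^2N^2t$; as its quadratic variation grows at most linearly, the strong law of large numbers for martingales (see, e.g., \cite{Mao08}) yields $\lim_{t\to\infty}M(t)/t=0$ a.s. Dividing the last display by $t$ and letting $t\to\infty$ gives
\begin{align*}
\limsup_{t\to\infty}\frac1t\log I(t)\le\frac{\beta^2}{2\sigma^2}-\mu-\gamma\qquad\mathrm{a.s.},
\end{align*}
and the second half of \eqref{eqmao4.18}, $\sigma^2>\beta^2/\big(2(\mu+\gamma)\big)$, makes the right–hand side strictly negative; hence $I(t)\to0$ exponentially fast, almost surely.

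There is no deep obstacle: the computation is the classical It\^o/Lyapunov argument for extinction. The one point demanding care is the constrained maximisation of $g$, where one must verify that the vertex $u^{*}=\beta/\sigma^2$ of the parabola actually lies in the feasible interval $(0,N)$ — this is exactly the role of $\sigma^2>\beta/N$, and if it failed one would instead be pushed to the boundary value $\beta N-\mu-\gamma-\tfrac12\sigma^2N^2$ appearing in Theorem~\ref{T:5.1}. Boundedness of $N-I(s)$ also makes $\langle M\rangle_t$ finite (indeed $\le\sigma^2N^2t$), so $M$ is a genuine martingale and no localisation subtleties arise.
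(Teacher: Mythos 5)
Your proof is correct and is essentially the standard argument: the paper itself states this result without proof (it is quoted from \cite[Theorem 4.3]{GGHMP2011}), and your It\^o-plus-quadratic-maximisation route is exactly the one used there and mirrored in the paper's own discrete-time analogue, where the same function $g$ of \eqref{mao4.4} is maximised at $\bar{x}=(\sigma^2N-\beta)/\sigma^2$, i.e.\ at your $u^{*}=N-\bar{x}=\beta/\sigma^2$, with the martingale term removed by the strong law of large numbers. No gaps.
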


Note that condition \eqref{eqmao4.18} implies that $R^S_0
\leq 1$.
\begin{theorem}
Under the condition  of Theorem \ref{Th5.3},
there is a constant $\t \in (0,  \Delta^{**})$ such that the scheme
 \eqref{**} has the property that
\begin{align}\la{mao4.2*}
\limsup_{k\t\rightarrow \infty}\frac{1}{k\t}\log(I^{\t}_k)\leq   -\mu-\gamma+ \frac{\beta^2}{2\sigma^2} +h(\t)<0~~~~a.s.;
\end{align}
namely, $I^{\t}_k$ tends to zero exponentially a.s. In other words, the disease dies out with probability one, where
 $\t^{**}$ is the positive solution of the equation (in $\t$)
$$
h(\t)=\mu+\gamma-\frac{\beta^2}{2\sigma^2},
$$
and $h(\t)$ is given by \eqref{h_4.4}.
\end{theorem}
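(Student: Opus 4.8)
The plan is to run, almost verbatim, the argument behind Theorem~\ref{yhfTh4.2}, changing only the elementary upper bound for the drift polynomial $g$ of~\eqref{mao4.4}. First I would note that nothing in the derivation of the one-step inequality~\eqref{mao4.3},
\begin{align*}
\log(I^{\t}_{k+1})\ \le\ \log(I^{\t}_{k})+g(I^{\t}_{k})\,\t+h(\t)\,\t+M_k ,
\end{align*}
uses the sign restriction $\sigma^2\le\beta/N$: the Taylor expansion of $V(\bar Y_{k+1})=\log(\bar I^{\t}_{k+1})$, the bound~\eqref{eqyhf5.6} on $V'''$, the cubic estimate~\eqref{eqyhf4.5} of $|\bar Y_{k+1}-Y_k|^3$, and the passage from $\bar Y_{k+1}$ to $Y_{k+1}$ rely only on $Y_k\le\log(K\t^{-1/2})$ and $I^{\t}_k\in(0,N)$, both of which hold for the scheme~\eqref{**} whatever the noise intensity; and $M_k$ is the same martingale-difference term (in $\t B_k$) as in that proof. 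So~\eqref{mao4.3} is available here without change.

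The only genuinely new ingredient is the bound on $g$ over $(0,N)$. Substituting $u=N-x$ in~\eqref{mao4.4} gives $g(x)=-\mu-\gamma+\beta u-\tfrac12\sigma^2u^2$, a downward parabola in $u$ maximised at $u^\ast=\beta/\sigma^2$. Under the hypothesis~\eqref{eqmao4.18} we have $\sigma^2>\beta/N$, hence $u^\ast\in(0,N)$, i.e. $x^\ast:=N-\beta/\sigma^2\in(0,N)$, and therefore
\begin{align*}
g(x)\ \le\ \max_{x\in\RR}g(x)=g(x^\ast)=-\mu-\gamma+\frac{\beta^2}{2\sigma^2}\qquad\text{for all }x\in\RR ;
\end{align*}
in particular $g(I^{\t}_k)\le-\mu-\gamma+\beta^2/(2\sigma^2)$ for every $k$. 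This is exactly the place where $\sigma^2>\beta/N$ replaces the opposite inequality used in Theorem~\ref{yhfTh4.2}.

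I would then insert this estimate into~\eqref{mao4.3}, iterate from $k=0$, divide by $k\t$, and let $k\t\to\infty$; by the large-number theorem for martingales (\cite{Mao08}) one has $\limsup_{k\t\to\infty}\tfrac1{k\t}\sum_{i=0}^{k-1}M_i=0$ a.s., exactly as in Theorem~\ref{yhfTh4.2}, so that
\begin{align*}
\limsup_{k\t\to\infty}\frac1{k\t}\log(I^{\t}_k)\ \le\ -\mu-\gamma+\frac{\beta^2}{2\sigma^2}+h(\t)\qquad\text{a.s.}
\end{align*}
Finally, \eqref{eqmao4.18} also forces $\sigma^2>\beta^2/\big(2(\mu+\gamma)\big)$, i.e. $\mu+\gamma-\beta^2/(2\sigma^2)>0$; choosing $\Delta^{\ast\ast}$ as the positive root of $h(\t)=\mu+\gamma-\beta^2/(2\sigma^2)$ (with $h$ as in~\eqref{h_4.4}) and taking $\t\in(0,\Delta^{\ast\ast})$ makes the displayed right-hand side strictly negative, which is the assertion~\eqref{mao4.2*}.

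The proof presents no real obstacle. The only points worth checking with care are (i) that the derivation of~\eqref{mao4.3} is genuinely insensitive to the relation between $\sigma^2$ and $\beta/N$, so that it can be transplanted wholesale, together with the strong-law property of the running sums $\sum_i M_i$ (identical to Theorem~\ref{yhfTh4.2}); and (ii) the correct localisation and evaluation of the maximum of the quadratic $g$, which is the single spot where the case $\sigma^2>\beta/N$ enters rather than its complement.
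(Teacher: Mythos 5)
Your proposal is correct and follows essentially the same route as the paper: the paper likewise transplants the one-step inequality \eqref{mao4.3} from the proof of Theorem \ref{yhfTh4.2} unchanged and then bounds $g$ on $(0,N)$ by its value at $\bar{x}=(\sigma^2N-\beta)/\sigma^2=N-\beta/\sigma^2$, which is exactly your maximiser $x^{\ast}$, obtaining $g(x)\le -\mu-\gamma+\beta^2/(2\sigma^2)$. Your substitution $u=N-x$ and the explicit check that condition \eqref{eqmao4.18} makes the limit negative are just slightly more detailed renderings of the same argument.
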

\begin{proof}
We use the same way as in the proof of  Theorem \ref{yhfTh4.2}.   By condition \eqref{eqmao4.18}, it is easy to see that $g(x)\leq g(\bar{x})$ for any $x\in (0, N)$, where
$$\bar{x}:=\frac{\sigma^2N-\beta}{\sigma^2}\in (0, N).$$ Compute
\begin{align*}
 g(\bar{x})=\beta N-\mu-\gamma-0.5\sigma^2 N^2+\frac{(\sigma^2N-\beta)^2}{2\sigma^2}=-\mu-\gamma+ \frac{\beta^2}{2\sigma^2}<0.
\end{align*}
This implies, in the same way as in the proof of Theorem \ref{yhfTh4.2}, that
\begin{align*}
 \limsup_{k\t\rightarrow +\infty}\frac{1}{k\t}\log(I^{\t}_{k})
\leq   g(\bar{x})
 +h(\t)~~\mathrm{a.s.},
\end{align*}
as required. The proof is hence complete.
\end{proof}


%
%
%
%

\section{\large\bf Examples and simulations}\la{s5}
In order to illustrate the efficiency of the  logarithmic TEM  scheme  we introduce some examples and  simulations.
We shall assume that the unit of time is one day and the population sizes are measured in units of 1 million, unless otherwise stated.

\begin{expl}\la{exp4.1}{\rm
Consider the  stochastic SIS epidemic model (see, e.g., \cite[Example 4.2]{GGHMP2011})
\begin{align}\la{nu4.1}
\mathrm{d}I(t) = \big[5 I(t)-0.5 I(t)^2\big]\mathrm{d}t
+0.035  I(t)\big(100-I(t)\big)\mathrm{d}B(t).
\end{align}
Noting that
$$
R_0^S=\frac{\beta N}{\mu+\gamma}-\frac{\sigma^2N^2}{2(\mu+\gamma)}=1.111-0.136<1~~~~and ~~~~\sigma^2=0.001225\leq \frac{\beta}{N}=0.005,
$$
we can therefore conclude, by Theorem \ref{mao4.1}, that for any initial value $I_0\in
(0, 100)$, $I(t)$ will tend to zero exponentially with probability one.

To compare to the simulations of the classical EM method (see, \cite[Example 4.2]{GGHMP2011}), then  all parameters are  same as the classical EM method. The simulations in Figure \ref{path1} show the sample paths of the solution for $t\in [0,50]$ by the logarithmic TEM scheme.    Both figures show clearly that the logarithmic TEM scheme \eqref{4.2*}  reproduces the dynamic properties of the underlying SDE \eqref{nu4.1}.
\begin{figure}[!htb]
\begin{center}
\includegraphics[angle=0, height=6cm, width=17cm]{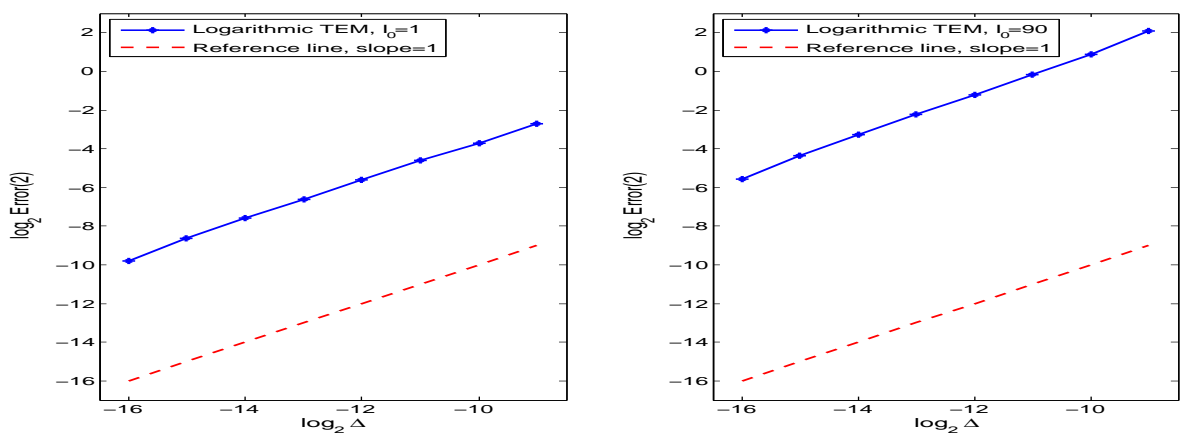}  
\caption{ The approximation error   of the exact solution and the numerical solution by  the logarithmic EM scheme \eqref{3.2*} as the function of step size $\t\in \{2^{-9},2^{-10},\dots,2^{-16}\}$, with initial values (left) $I_0=1$ and   (right) $I_0=90$.}\label{error1}
\end{center}                              
\vspace{-1em}
\end{figure}
\begin{figure}[!htb]
\begin{center}
\includegraphics[angle=0, height=6cm, width=17cm]{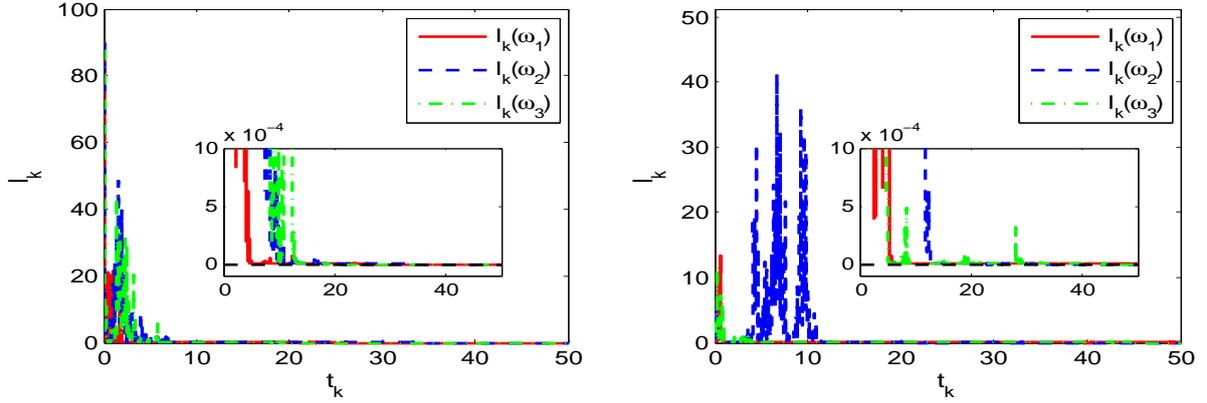}  
\caption{ The approximation error   of the exact solution and the numerical solution by  the logarithmic EM scheme \eqref{3.2*} as the function of step size $\t\in \{2^{-9},2^{-10},\dots,2^{-16}\}$, with initial values (left) $I_0=90$ and   (right) $I_0=1$.}\label{path1}
\end{center}                              
\vspace{-1em}
\end{figure}

 Due to no closed-form of the solution,
using scheme  \eqref{4.2*}, we regard  the better approximation with $\t=2^{-19}$  as  the exact solution $I(t)$ and compare it with the numerical solution $I_k$ with $\Delta=2^{-9}, 2^{-10}, \dots, 2^{-16}$. To compute the approximation error, we run $M$ independent trajectories where $I^{(j)}(t_k)$ and $I^{(j)}_k$ represent the $j$th trajectories of  the exact solution $I(t)$ and  the numerical solution $I_k$ respectively. Thus
$$
 \mathrm{Error}(p):=\bigg(\mathbb{E}\Big[\sup_{k=0,\ldots, \lfloor T/\Delta\rfloor}|I(t_k)-I_k|^p\Big]\bigg)^{\frac{1}{p}} \approx \bigg( \frac{1}{M}\sum_{j=1}^{M}\Big[
\sup_{k=0,\ldots, \lfloor T/\Delta\rfloor}|I^{(j)}(t_k)-I^{(j)}_k|^p\Big]\bigg)^{\frac{1}{p}}.
$$
To reduce the time of simulations without losing any necessary illustration, we only simulate the paths for $t\in[0,2]$, and
 plotting  the  $\log_{2}$-$\log_{2}$  approximation error $\big(\mathbb{E}\big[\sup_{k=0,\ldots, \lfloor 2/\Delta\rfloor}|I(t_k)-I_k|^5\big]\big)^{\frac{1}{5}}$   with $M=1000$.  The   blue solid line depicts  $\log_{2}$-$\log_{2}$ error   while the red dashed  is  a reference line of slope $1$ in Figure \ref{error1}.

}
\end{expl}

\begin{expl}{\rm
  We keep the system parameters the same as in Example \ref{exp4.1} but
let $\sigma = 0.08$, so the stochastic SIS model \eqref{2.2*} becomes (see, e.g., \cite[Example 4.4]{GGHMP2011})
\begin{align}
\mathrm{d}I(t) = \big[5 I(t)-0.5 I(t)^2\big]\mathrm{d}t
+0.08 I(t)\big(100-I(t)\big)\mathrm{d}B(t).
\end{align}
It is easy to verify that the system parameters obey condition \eqref{eqmao4.18}. We can therefore
conclude, by Theorem \ref{Th5.3}, that for any initial value $I_0\in (0, 100)$, $I(t)$ will tend to zero exponentially with probability one.

The computer
simulations shown in Figure \ref{error2} clearly support these results.
\begin{figure}[!htb]
\begin{center}
\includegraphics[angle=0, height=6cm, width=17cm]{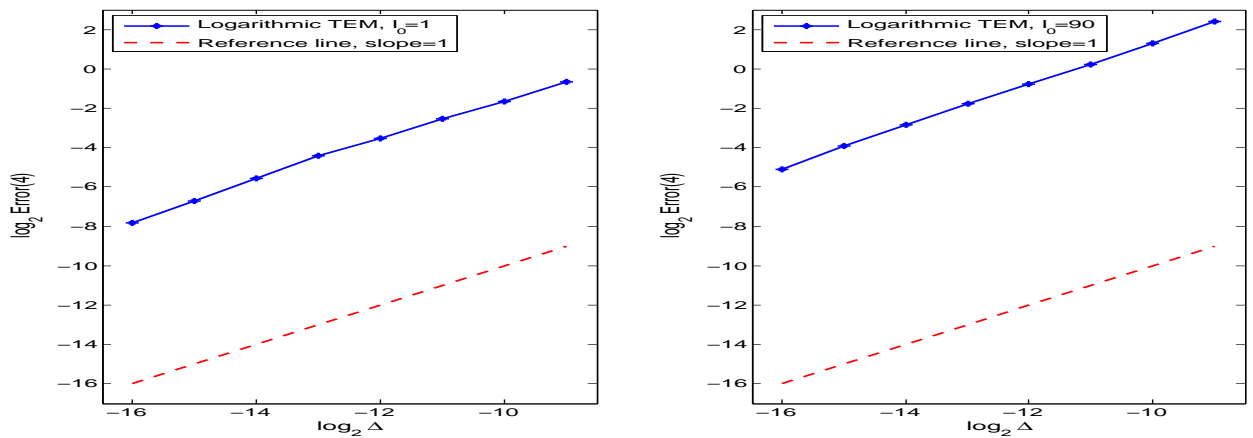}  
\caption{ The approximation error   of the exact solution and the numerical solution by  the logarithmic EM scheme \eqref{3.2*} as the function of step size $\t\in \{2^{-9},2^{-10},\dots,2^{-16}\}$, with initial values (left) $I_0=1$ and   (right) $I_0=90$.}\label{error2}
\end{center}                              
\vspace{-1em}
\end{figure}
\begin{figure}[!htb]
\begin{center}
\includegraphics[angle=0, height=6cm, width=17cm]{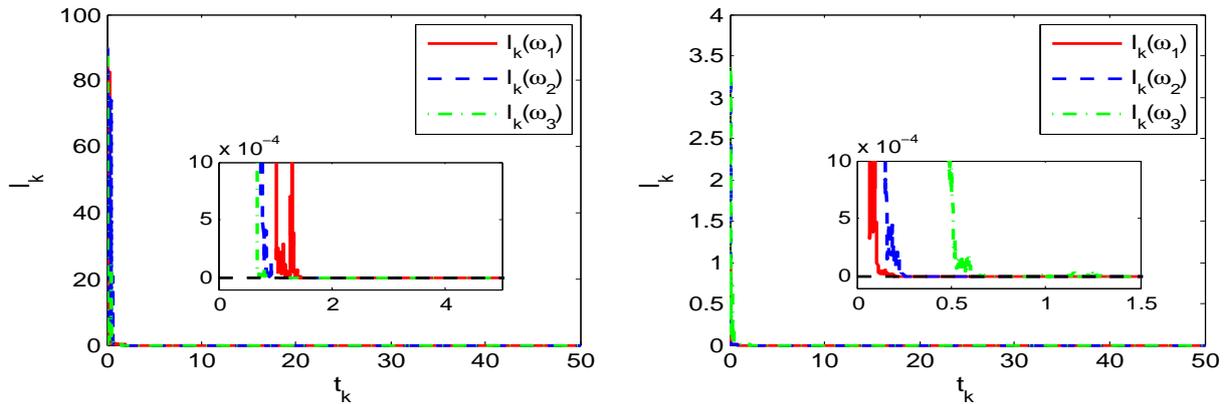}  
\caption{ The approximation error   of the exact solution and the numerical solution by  the logarithmic EM scheme \eqref{3.2*} as the function of step size $\t\in \{2^{-9},2^{-10},\dots,2^{-16}\}$, with initial values (left) $I_0=90$ and   (right) $I_0=1$.}\label{path2}
\end{center}                              
\vspace{-1em}
\end{figure}
}
\end{expl}

\begin{expl}{\rm
Consider the  stochastic SIS epidemic model (see, e.g., \cite[Example 5.3]{GGHMP2011})
\begin{align}\la{nu4.2}
\mathrm{d}I(t) = \big[5 I(t)-0.5 I(t)^2\big]\mathrm{d}t
+0.01  I(t)\big(100-I(t)\big)\mathrm{d}B(t).
\end{align}
The simulation  descriptions of this example are similar to Example \ref{exp4.1}, so we omit it here.
\begin{figure}[!htb]
\begin{center}
\includegraphics[angle=0, height=6cm, width=17cm]{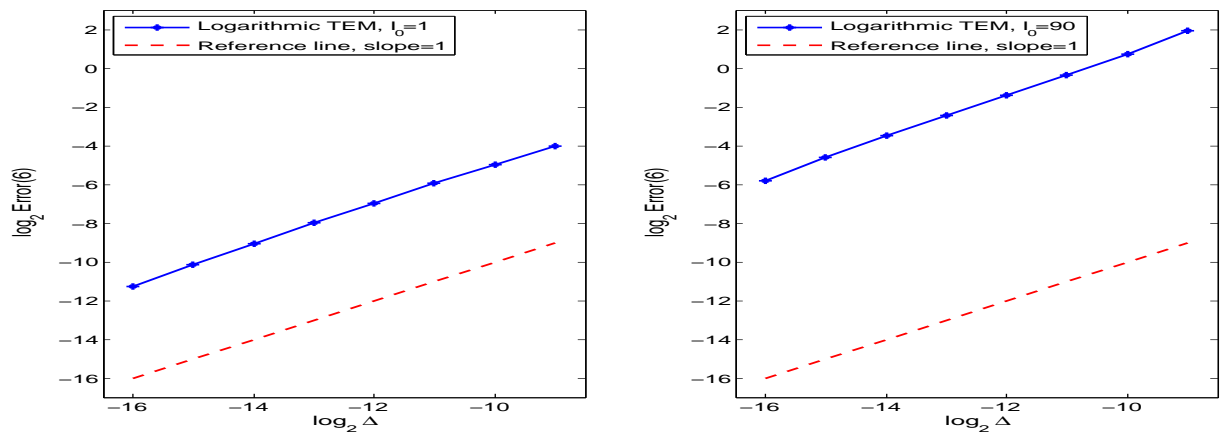}  
\caption{ The approximation error   of the exact solution and the numerical solution by  the logarithmic EM scheme \eqref{3.2*} as the function of step size $\t\in \{2^{-9},2^{-10},\dots,2^{-16}\}$, with initial values (left) $I_0=1$ and   (right) $I_0=90$.}\label{error3}
\end{center}                              
\vspace{-1em}
\end{figure}
\begin{figure}[!htb]
\begin{center}
\includegraphics[angle=0, height=6cm, width=17cm]{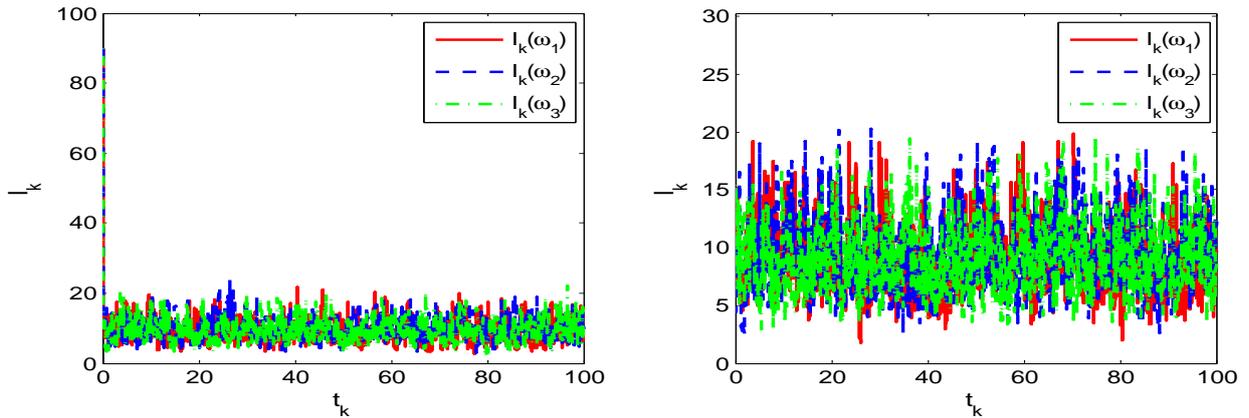}  
\caption{ The approximation error of the exact solution and the numerical solution by  the logarithmic EM scheme \eqref{3.2*} as the function of step size $\t\in \{2^{-9},2^{-10},\dots,2^{-16}\}$, with initial values (left) $I_0=90$ and   (right) $I_0=1$.}\label{path3}
\end{center}                              
\vspace{-1em}
\end{figure}
}
\end{expl}

\begin{rem}
This paper aims to show strong convergence rate and extinction of logarithmic TEM method for nonlinear SDEs \eqref{2.2*} under nonglobally monotone coefficients. However, we
have not obtained the persistence and stationary distribution of logarithmic TEM method,   which could be a new direction of our further work.  Although we have not proved the persistence of logarithmic TEM method, it is easy to see from Figure \ref{path3} that without extra restrictions the numerical solutions of the logarithmic TEM method  stay in step of persistence with the exact solutions.
\end{rem}

 \bibliography{mybibfile}

\end{document}